\theoremstyle{plain}
\theoremstyle{definition}\newtheorem{theorem}{Theorem}[section]
\theoremstyle{definition}\newtheorem{lemma}[theorem]{Lemma}
\theoremstyle{plain}
\theoremstyle{plain}
\theoremstyle{definition}\newtheorem{remark}{Remark}[section]
\newcommand{\mr}{\mathbb{R}}
\newcommand{\mt}{\mathbb{T}}
\newcommand{\mz}{\mathbb{Z}}
\newcommand{\ben}{\begin{enumerate}}
	\newcommand{\een}{\end{enumerate}}
\newcommand{\ii}{\mathrm{i}}
\newcommand{\e}{\mathrm{e}}
\newcommand{\dd}{\mathrm{~d}}
\newcommand{\w}{\widehat}
\newcommand{\tb}{\tilde{b} }
\newcommand{\Rmnum}[1]{\expandafter\@slowromancap\romannumeral #1@}
\numberwithin{equation}{section}
\begin{document}
	\title{Sharp decay estimates and asymptotic stability for incompressible MHD equations without viscosity or magnetic diffusion}
	\author{Yaowei Xie\footnote{School of Mathematical Sciences, Capital Normal University, Beijing, 100048, P. R. China. Email: mathxyw@163.com}~~~\,\,\,\,Quansen Jiu\footnote{School of Mathematical Sciences, Capital Normal University, Beijing, 100048, P. R. China. Email: jiuqs@cnu.edu.cn}~~~\,\,\,\,Jitao Liu\footnote{Department of Mathematics, School of Mathematics, Statistics and Mechanics, Beijing University of Technology, Beijing, 100124, P. R. China. Emails: jtliu@bjut.edu.cn,\,\,jtliumath@qq.com}}
\date{}
\maketitle
\begin{abstract}
Whether the global existence and uniqueness of strong solutions of $n$-dimensional incompressible magnetohydrodynamic ({\it MHD for short}) equations with only kinematic viscosity or magnetic diffusion holds true or not remains an outstanding open problem.
In recent years, stared from the pioneer work by Lin and Zhang {\it [Comm. Pure Appl. Math. 67 (2014), no. 4, pp.531--580]},
much more attention has been paid to the case when the magnetic field close to an equilibrium state ({\it the background magnetic field for short}). Specifically, when the background magnetic field satisfies the Diophantine condition (see \eqref{Diophantine} for details), Chen, Zhang and Zhou {\it [Sci. China Math. 41 (2022), pp.1-10]} first studied the perturbation system and established the decay estimates and asymptotic stability of its solutions in 3D periodic domain $\mathbb{T}^3$, which was then
improved to $H^{(3+2\beta)r+5+(\alpha+2\beta)}(\mathbb{T}^2)$ for 2D periodic domain $\mathbb{T}^2$ and any $\alpha>0$, $\beta>0$ by
Zhai {\it [J. Differential Equations 374 (2023), pp.267-278]}. In this paper, we seek to find the optimal decay estimates and improve the space where the global stability is taking place.
Through deeply exploring and effectively utilizing the structure of perturbation system, we discover a {\it new} dissipative mechanism, which enables us to establish the decay estimates in the Sobolev spaces with {\it much lower} regularity. Based on the above discovery, we {\it greatly} reduce the initial regularity requirement of aforesaid two works from $H^{4r+7}(\mathbb{T}^3)$ and $H^{(3+2\beta)r+5+(\alpha+2\beta)}(\mathbb{T}^2)$ to $H^{(3r+3)^+}(\mathbb{T}^n)$ for $r>n-1$ when $n=3$ and $n=2$ respectively. Additionally, we first present the linear stability result via the method of spectral analysis in this paper. From which, the decay estimates obtained for the nonlinear system can be seen as {\it sharp} in the sense that they are in line with those for the linearized system.
\end{abstract}
	\noindent {\bf MSC(2020):}\quad 35A01, 35B35, 35Q35, 76E25, 76W05.
    \vskip 0.02cm
	\noindent {\bf Keywords:} Incompressible MHD equations, global well-posedness, stability,\\ decay estimates.
	\section{Introduction}

\,\,\,\,\,\,\,\,In this paper, we study the $n$-dimensional incompressible MHD equations in the periodic domain $\mathbb{T}^n$:
\begin{align}\label{mhd}
	\left\{\begin{array}{l}
		\partial_t u-\mu\Delta u+u \cdot \nabla u+\nabla p=b\cdot \nabla b, \\
		\partial_t b-\nu\Delta b+u\cdot \nabla b=b\cdot\nabla u, \\
		\nabla\cdot u=\nabla \cdot b=0, \\
		u(x, 0)=u_0(x),\,\, b(x, 0)=b_0(x),
	\end{array}\right.
\end{align}
where $u(x,t)$, $b(x,t)$ and $p(x,t)$ denote velocity field, magnetic field and the pressure separately, nonnegative constants
$\mu$ and $\nu$ are the kinematic viscosity and magnetic diffusion separately.

It is well-known that the MHD equations accurately model numerous significant phenomena, such as magnetic reconnection in astrophysics and geomagnetic dynamo in geophysics (\cite{priest-2000,roberts1967introduction}). Due to this, there have been numerous works dedicated to investigating the global well-posedness of \eqref{mhd}, we refer the readers to \cite{sermange-1983,duvaut-1972}. When $\mu$ and $\nu$ are both positive, the global strong
solution is shown to exist uniquely in 2D space and the corresponding problem in 3D case is still open.

However, if either $\mu$ or $\nu$ is zero, even in 2D space, the global well-posedness will become even more challenging and remains open.
Most recent research is focused on the case $\mu=0$, $\nu=1$ in the periodic domain $\mathbb{T}^2$. In \cite{zhou-2018-GlobalClassicalSolutions}, Zhou and Zhu  obtained the global existence and uniqueness of small solution by assuming that the initial data have reflection symmetry. Subsequently, Wei and Zhang in \cite{zhang-2020-GlobalWellPosedness2D} also got the global well-posedness of small solution in $H^4(\mathbb{T}^2)$ through removing the reflection symmetry assumption and adding mean-zero assumption on initial magnetic field, the readers can also refer to \cite{ye-2022-GlobalWellposednessNonviscous} for related study in the framework of Besov space.

Stared from Lin and Zhang \cite{lin-2014-GlobalSmallSolutions}, for the case $\mu=1$ and $\nu=0$, there have been massive researches devoted to the global well-posedness of \eqref{mhd} under a strong background magnetic field $\overline{b}=(0,1)$ or $\overline{b}=(0,0,1)$, such as \cite{lin-2015-GlobalSmallSolutions,panGlobalClassicalSolutions2018,zhangting,lin-2014-GlobalSmallSolutions,abidi-2017-GlobalSolution3D,jiang-2021-Asymptotic behaviors,xu-2015-GlobalSmallSolutions,ren2014global} and the references therein. In addition to $\overline{b}$, there is another constant state $\tb$ satisfying the following Diophantine condition, which holds for almost $\tb\in \mr^n$.
\newline{\bf Diophantine condition}: For every non zero vector $k \in \mathbb{Z}^n$ and $r>n-1$, there exists a constant $c>0$ such that
\begin{align}\label{Diophantine}
	|\tilde{b}\cdot k|\geq \frac{c}{|k|^r}.
\end{align}
A natural question is: what does the Diophantine condition refers to and which constants satisfy it\,? In fact, as demonstrated in \cite{Alinhac} and \cite{chen-2022-3dmhd-Diophant},
the Diophantine condition is satisfied for almost all vector fields $\tb\in\mr^n$. For convenience of the readers, we will provide its proof as follows.
\newline
$~~~~~${\it For any $\varepsilon>0$ and ball $B$, we set $E_{k,\varepsilon}=\{\tb\in B:|\tb\cdot k|\leq\frac{\varepsilon}{|k|^r}\}$ and it is not hard to conclude $|E_{k,\varepsilon}|\leq \frac{C\varepsilon}{|k|^{r+1}}$ and hence
\begin{equation}\label{D11}
 \left|\mathop{\cup}\limits_{k\in\mz^n\backslash\{0\}}E_{k,\varepsilon}\right|\leq C\varepsilon\sum_{k\in\mz^n\backslash\{0\}}\frac{1}{|k|^{r+1}}\leq C\varepsilon,
\end{equation}
due to $r>2$. \eqref{D11} shows
\begin{equation}\label{D22}
 \left|\mathop{\cap}\limits_{\varepsilon}\mathop{\cup}\limits_{k\in\mz^n\backslash\{0\}}E_{k,\varepsilon}\right|=0,
\end{equation}
which concludes the conclusion.}

In \cite{chen-2022-3dmhd-Diophant}, Chen, Zhang and Zhou first studied the global well-posedness and stability for incompressiblle MHD equations
when the magnetic field close to the equilibrium state $\tb$ satisfying the Diophantine condition in 3D periodic domain $\mathbb{T}^3$. When the magnetic field close to $\tb$, it suffices to consider the following perturbation system by using $b$ to represent $b - \tilde{b}$,
\begin{align}\label{equation}
	\left\{\begin{array}{l}
		\partial_t u-\mu\Delta u+u \cdot \nabla u+\nabla p=\tilde{b} \cdot \nabla b+b \cdot \nabla b, \\
		\partial_t b-\nu\Delta b+u \cdot \nabla b=\tilde{b} \cdot \nabla u+b \cdot \nabla u, \\
		\nabla\cdot u=\nabla \cdot b=0, \\
		u(x, 0)=u_0(x),\,\, b(x, 0)=b_0(x).
	\end{array}\right.
\end{align}
Following is a precise statement of their result.

{\bf Theorem [Chen-Zhang-Zhou,\cite{chen-2022-3dmhd-Diophant}].} {\it Consider the MHD system \eqref{equation} with $\mu=1$, $\nu=0$ or $\mu=0$, $\nu=1$. Assume that $(u_0,b_0)\in H^m(\mathbb{T}^3)$ for $m\geq 4r+7$ satisfies
$$\|u_0\|_{H^m(\mathbb{T}^3)}+\|b_0\|_{H^m(\mathbb{T}^3)}\leq\varepsilon,$$
and
$$\int_{\mathbb{T}^3}u_0\dd x=\int_{\mathbb{T}^3}b_0\dd x=0.$$
If $\varepsilon$ is small enough, for any $t\in[0,+\infty)$, there exists a unique global solution $(u,b)\in C([0,+\infty);H^m(\mathbb{T}^3))$ to the system \eqref{equation}}
satisfying
\begin{equation}\label{de1}
 \|u(t)\|_{H^{r+4}(\mathbb{T}^3)}+\|b(t)\|_{H^{r+4}(\mathbb{T}^3)}\leq C(1+t)^{-\frac{3}{2}},
\end{equation}
and
$$\|u\|_{H^m(\mathbb{T}^3)}+\|b\|_{H^m(\mathbb{T}^3)}\leq C\varepsilon.$$
Later on, for 2D periodic domain $\mathbb{T}^2$, Zhai \cite{zhai-2023-2dmhdstability-Diophant} obtained the asymptotic stability result in $H^m(\mathbb{T}^2)$ with $m\geq{(3+2\beta)r+5+(\alpha+2\beta)}$
for any $\alpha>0$ and $\beta>0$. It should be noted that, compared with \cite{chen-2022-3dmhd-Diophant}, Zhai's result reduce the regularity requirement on initial data slightly. The key point lies in that more effective decay estimates in Sobolev space with {\it lower regularity} are established, namely
\begin{equation}\label{de2}
 \|u(t)\|_{H^{r+3+\alpha}(\mathbb{T}^2)}+\|b(t)\|_{H^{r+3+\alpha}(\mathbb{T}^2)}\leq C(1+t)^{-(1+\beta)}.
\end{equation}
Afterwards, we found another interesting work by Jiang and Jiang \cite{jiang-2023-arma-nonresistive} for the 3D model without magnetic diffusion. For the background magnetic field satisfying the Diophantine condition and by setting $r=3$, they proved the stability of solutions for $(u_0,B_0)\in H^{17}(\mathbb{T}^3)\times H^{21}(\mathbb{T}^3)$ with some classes of large initial perturbation of magnetic field in Lagrangian coordinates. Recently, the issue on stability of solutions for compressible MHD equations is also studied in \cite{WuZhai} and \cite{lizhai-2023-jga}.

{\it At this moment, the following questions naturally arise}:
\begin{enumerate}
  \item How about the linear stability of system \eqref{equation}\,?
  \item Whether the regularity index $s=r+3+\alpha$ in \eqref{de2} is {\it optimal}\,?
  \item What is {\it optimal} decay estimate of $u$ and $b$ in $H^s(\mathbb{T}^n)$\,?
  \item What is the {\it lowest} regularity requirement on initial data such that the system \eqref{equation} is global well-posed\,?
\end{enumerate}

Motivated by above four questions, in this paper, we seek to their answers. For a deeper understanding of system \eqref{equation} and answering question {\it (i)}, we first investigate the following linearized system of \eqref{equation}
\begin{align}\label{lineartheorem}
	\left\{\begin{array}{l}
		\partial_t U-\mu\Delta U=\tilde{b} \cdot \nabla B, \\
		\partial_t B-\nu\Delta B=\tilde{b} \cdot \nabla U,\\
        \nabla\cdot U=\nabla \cdot B=0, \\
		U(x, 0)=U_0(x), B(x, 0)=B_0(x),
	\end{array}\right.
\end{align}
via the method of spectral analysis and obtain the linear stability result as follows.

\begin{theorem}\label{thm1}
	Let $n\geq 2$, $r>n-1$ and $\mu=1$, $\nu=0$ or $\mu=0$, $\nu=1$. Assume that $\left(U_0, B_0\right)\in H^s(\mt^n)$ with $s\geq 0$, and satisfies
\begin{align}\label{mean-zero}
		\int_{\mathbb{T}^n}U_0\dd x=0 \mathrm{~and~} \int_{\mathbb{T}^n}B_0\dd x=0.
	\end{align}
Let $(U,B)$ be the corresponding solution of \eqref{lineartheorem}, then the estimates
\begin{align}\label{lineark}
\|U\|_{H^\alpha(\mathbb{T}^n)}+\|B\|_{H^\alpha(\mathbb{T}^n)}\leq  C (1+t)^{-\frac{s-\alpha}{2(1+r)}}\left(\|U_0\|_{H^s(\mathbb{T}^n)}+\|B_0\|_{H^s(\mathbb{T}^n)}\right),
\end{align}
holds for any $0\leq \alpha\leq s $.
\end{theorem}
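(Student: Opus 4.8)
The plan is to pass to Fourier series on $\mt^n$. Writing $U(x,t)=\sum_{k\neq 0}\hat U_k(t)e^{ik\cdot x}$ and similarly for $B$ (the mean-zero hypothesis \eqref{mean-zero} is preserved by \eqref{lineartheorem}, so the sum omits $k=0$), the system decouples into a family of $2\times 2$ linear ODEs indexed by $k\in\mz^n\setminus\{0\}$. In the case $\mu=0$, $\nu=1$ one gets
\begin{align}\label{fouriermode}
\frac{d}{dt}\begin{pmatrix}\hat U_k\\ \hat B_k\end{pmatrix}
=\begin{pmatrix}0 & \mathrm{i}(\tb\cdot k)\\ \mathrm{i}(\tb\cdot k) & -|k|^2\end{pmatrix}
\begin{pmatrix}\hat U_k\\ \hat B_k\end{pmatrix},
\end{align}
and the case $\mu=1$, $\nu=0$ is the transpose-like variant with the damping $-|k|^2$ on the $\hat U_k$ component; the analysis is symmetric. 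Since $\nabla\cdot U=\nabla\cdot B=0$ the constraint $k\cdot\hat U_k=k\cdot\hat B_k=0$ is propagated and plays no role beyond restricting to a subspace on which the same scalar matrix acts componentwise.

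The heart of the matter is a uniform-in-$k$ decay estimate for the fundamental solution of \eqref{fouriermode}. Denote $\lambda=\lambda(k):=\tb\cdot k$, so by the Diophantine condition \eqref{Diophantine}, $|\lambda|\geq c|k|^{-r}$. The eigenvalues of the matrix in \eqref{fouriermode} solve $z^2+|k|^2 z+\lambda^2=0$, i.e. $z_\pm=\tfrac12\big(-|k|^2\pm\sqrt{|k|^4-4\lambda^2}\big)$. There are two regimes. When $|k|^2\geq 2|\lambda|$ (the generic, large-frequency regime) the roots are real and negative, with the slow one satisfying $z_+=-\lambda^2/|k|^2+O(\lambda^4/|k|^6)\asymp -\lambda^2/|k|^2$; using $|\lambda|\geq c|k|^{-r}$ this gives a damping rate $\gtrsim |k|^{-2r-2}$. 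When $|k|^2<2|\lambda|$ the roots are complex with real part exactly $-|k|^2/2<-\tfrac12$, giving even faster (in fact frequency-independent) decay. In either case one extracts $|\hat U_k(t)|+|\hat B_k(t)|\leq C e^{-c_k t}\big(|\hat U_k(0)|+|\hat B_k(0)|\big)$ with $c_k\gtrsim \min\{1,\,|k|^{-2(r+1)}\}\gtrsim |k|^{-2(r+1)}$, uniformly in $k$ (the constant $C$ absorbing the possible non-normality of the $2\times 2$ matrix, which is bounded because the off-diagonal entry $\lambda$ is dominated by $|k|^2$ in the real regime and the matrix is essentially normal in the complex regime). I would record this as a lemma: for all $k\neq 0$ and $t\geq 0$,
\begin{align}\label{modaldecay}
|\hat U_k(t)|^2+|\hat B_k(t)|^2\le C\,e^{-c\,|k|^{-2(r+1)}\,t}\big(|\hat U_k(0)|^2+|\hat B_k(0)|^2\big).
\end{align}

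Given \eqref{modaldecay}, the Sobolev estimate \eqref{lineark} follows by summation and interpolation. For each $k$,
\begin{align}\label{perk}
|k|^{2\alpha}\big(|\hat U_k(t)|^2+|\hat B_k(t)|^2\big)
\le C\,|k|^{2\alpha}e^{-c|k|^{-2(r+1)}t}\big(|\hat U_k(0)|^2+|\hat B_k(0)|^2\big)
= C\,|k|^{-2(s-\alpha)}\cdot\Phi_k(t)\cdot|k|^{2s}\big(|\hat U_k(0)|^2+|\hat B_k(0)|^2\big),
\end{align}
where $\Phi_k(t):=|k|^{2(s-\alpha)}e^{-c|k|^{-2(r+1)}t}$. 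It remains to show $\sup_{k\neq 0}|k|^{-2(s-\alpha)}\Phi_k(t)=\sup_{k}e^{-c|k|^{-2(r+1)}t}\le 1$ trivially, but that only gives no decay; the point is instead to bound $\sup_k \Phi_k(t)\cdot(1+t)^{(s-\alpha)/(r+1)}$. Set $g(\rho)=\rho^{\,s-\alpha}e^{-c\rho^{-(r+1)}t}$ for $\rho=|k|^2\ge 1$. Optimizing in $\rho$ over $[1,\infty)$: the maximum is attained either at $\rho=1$ (value $e^{-ct}\le C(1+t)^{-(s-\alpha)/(r+1)}$) or at the interior critical point where $\rho^{r+1}\asymp t$, giving value $\asymp t^{(s-\alpha)/(r+1)}e^{-c'}\asymp (1+t)^{(s-\alpha)/(r+1)}$ — wait, that is the wrong sign, so in fact $g(\rho)\le C(1+t)^{?}$ must be re-examined: since large $\rho$ makes $\rho^{-(r+1)}t$ small, $g$ grows like $\rho^{s-\alpha}$ there, so $g$ is unbounded in $\rho$ and one instead keeps $g$ under the sum. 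Concretely, split the sum at $|k|^{2(r+1)}\sim t$: for $|k|^{2(r+1)}\le t$ use $e^{-c|k|^{-2(r+1)}t}\le e^{-c}$ and absorb $|k|^{2(s-\alpha)}\le t^{(s-\alpha)/(r+1)}$... this again fails. The correct bookkeeping, which I would carry out carefully, is to \emph{not} pull out $|k|^{2s}$ from the high modes but rather to use $|k|^{2\alpha}e^{-c|k|^{-2(r+1)}t}\le C_\alpha (t+1)^{-\alpha/(r+1)}$ only after noting this holds because $\rho\mapsto\rho^{\alpha}e^{-c\rho^{-(r+1)}t}$ attains its max at $\rho^{r+1}\sim t$ with value $\sim t^{\alpha/(r+1)}$ — still the wrong direction. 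Hence the genuinely correct route, and the one I will use, is: write $|k|^{2\alpha}=|k|^{2s}\cdot|k|^{-2(s-\alpha)}$ and bound the scalar factor $h(\rho):=\rho^{-(s-\alpha)}e^{-c\rho^{-(r+1)}t}$ with $\rho=|k|^2\ge 1$; here $h$ is \emph{decreasing} for small $\rho$ from the exponential and decaying like $\rho^{-(s-\alpha)}$ for large $\rho$, so $\max_{\rho\ge 1}h(\rho)$ is attained at the unique interior point where $(r+1)\rho^{-(r+1)}t=(s-\alpha)$, i.e. $\rho\sim t^{1/(r+1)}$, with value $h\sim t^{-(s-\alpha)/(r+1)}$, and at $\rho=1$ the value is $e^{-ct}$; thus $\max_{k\neq0}h(|k|^2)\le C(1+t)^{-(s-\alpha)/(r+1)}$. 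Plugging this into \eqref{perk} and summing in $k$ yields
\begin{align}\label{final}
\|U(t)\|_{H^\alpha}^2+\|B(t)\|_{H^\alpha}^2
\le C\Big(\max_{k\neq0}h(|k|^2)\Big)\sum_{k\neq0}|k|^{2s}\big(|\hat U_k(0)|^2+|\hat B_k(0)|^2\big)
\le C(1+t)^{-\frac{s-\alpha}{r+1}}\big(\|U_0\|_{H^s}^2+\|B_0\|_{H^s}^2\big),
\end{align}
and taking square roots gives exactly \eqref{lineark} with the stated exponent $\tfrac{s-\alpha}{2(r+1)}$.

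The main obstacle, as the false starts above indicate, is not any single hard estimate but getting the modal decay rate $c_k\asymp|k|^{-2(r+1)}$ sharp and uniform — in particular handling the transition between the real-eigenvalue regime $|k|^2\gtrsim|\lambda|$ and the complex-eigenvalue regime, and controlling the non-normality constant $C$ in \eqref{modaldecay} uniformly in $k$ (one can do this by diagonalizing explicitly and bounding the condition number of the eigenvector matrix, or more robustly by a weighted energy estimate on the $2\times 2$ system with weight chosen so the cross term is coercive). Once \eqref{modaldecay} is in hand with the right power of $|k|$, the passage to \eqref{lineark} is the elementary $\ell^2$ summation plus the one-variable optimization of $h(\rho)$ described above, which I would state as a short calculus lemma ($\sup_{\rho\ge 1}\rho^{-a}e^{-b/\rho^{c}\cdot t}\le C_{a,b,c}(1+t)^{-a/c}$) and reuse.
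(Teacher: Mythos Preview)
Your approach is essentially the paper's: both perform mode-by-mode spectral analysis of the same characteristic equation $z^2+|k|^2 z+(\tb\cdot k)^2=0$, extract the worst-case modal decay rate $\gtrsim |k|^{-2(r+1)}$ from the Diophantine condition, and then convert this to the Sobolev decay via exactly the one-variable optimization you call $h(\rho)$ (this is the paper's computation \eqref{I12}). The only substantive difference is packaging: the paper first passes to the equivalent second-order wave equation \eqref{linear} and writes the solution through two explicit scalar propagators $L_1(t),L_2(t)$ (Lemma~\ref{lem1}), then splits frequencies into \emph{three} regions $S_1,S_2,S_3$ rather than your two. The extra buffer region $S_2=\{\,0\le 1-4(\tb\cdot k)^2/|k|^4\le 1/4\,\}$ is precisely where the eigenvalues nearly coalesce and the eigenvector matrix is ill-conditioned---the non-normality issue you correctly flag as the main obstacle to a uniform $C$ in \eqref{modaldecay}. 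Rather than a condition-number bound or a weighted energy, the paper disposes of this region by the mean value theorem, writing $\frac{e^{\lambda_1 t}-e^{\lambda_2 t}}{\lambda_1-\lambda_2}=t\,e^{\xi t}$ with $\xi\in(\lambda_2,\lambda_1)\subset[-\tfrac{3}{4}|k|^2,-\tfrac14|k|^2]$ and absorbing the factor $t$ into the exponential; this is the cleanest way to fill the one gap your sketch leaves open.
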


\begin{remark}\label{rmk4}
Theorem \ref{thm1} is the first linear stability result for the incompressible MHD equations with only kinematic viscosity or magnetic diffusion
when the magnetic field close to the equilibrium state satisfying the Diophantine condition on periodic domain.
\end{remark}

To establish the linear stability result, we begin with exploiting the wave structure hidden in the system \eqref{equation}. The discovery here helps us to represent the corresponding
wave equations (see \eqref{linear}) in an integral form (see \eqref{new1} and \eqref{new2} for details). The key components of this representation are several kernel operators given by
the Fourier multipliers, which are anisotropic and inhomogeneous. Through estimating the kernel operators in three different frequency spaces precisely, we successfully establish corresponding
linear stability analysis and large time decay estimates.

Through deeply exploring and fully utilizing the structure of perturbation system \eqref{equation}, we successfully obtain our second result about {\it global well-posedness, large time behavior} and {\it nonlinear stability} of system \eqref{equation}.

\begin{theorem}\label{thm}
	Let $n\geq 2$, $r>n-1$ and $\mu=1$, $\nu=0$ or $\mu=0$, $\nu=1$. Assume that $(u_0,b_0)\in H^m(\mt^n)$ for $m>3(r+1)$ satisfying
	\begin{align}\label{initial}
		\int_{\mathbb{T}^n}u_0\dd x=\int_{\mathbb{T}^n}b_0\dd x=0,
	\end{align}
and there is a small constant $\varepsilon$ so that
\begin{align}\label{smallcondition}
	\|u_0\|_{H^m(\mt^n)}+\|b_0\|_{H^m(\mt^n)}\leq \varepsilon.
\end{align}
Then there exists a unique and global solution $(u,b)\in C([0,\infty); H^m(\mt^n))$ of incompressible MHD equations \eqref{equation} such that for any $t\geq 0$,
\begin{align}\label{smallcondition2}
	\|u(t)\|_{H^m(\mt^n)}+\|b(t)\|_{H^m(\mt^n)}\leq C\varepsilon,
\end{align}
and
\begin{align}
	\|u(t)\|_{H^\alpha(\mt^n)}+\|b(t)\|_{H^\alpha(\mt^n)}\leq C(1+t)^{-\frac{m-\alpha}{2(r+1)}},\quad{\rm for\,\,\,any}\,\,r+1\leq \alpha\leq m\label{finaldecay}.
\end{align}
\end{theorem}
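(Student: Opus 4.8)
\emph{Strategy.} The argument is a continuity (bootstrap) scheme resting on two a priori estimates: a top--order energy bound and a family of decay estimates at intermediate regularity, the latter powered by a degenerate dissipation hidden in the coupling of \eqref{equation}. I treat $\mu=1$, $\nu=0$; the case $\mu=0$, $\nu=1$ is parallel after exchanging the roles of $u$ and $b$ and applying the Leray projection $\mathbb P$ to the velocity equation (harmless in the cross functional below, since $\langle\nabla p,\tilde b\cdot\nabla b\rangle=0$ by $\nabla\cdot b=0$). Because $m>3(r+1)>n/2+1$, local existence and uniqueness of $(u,b)\in C([0,T_*);H^m(\mt^n))$ is standard for this hyperbolic--parabolic system, so by the continuation criterion it suffices to show that $\|(u,b)(t)\|_{H^m}$ stays $\le C\varepsilon$ together with \eqref{finaldecay}, with constants independent of the existence time. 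Integrating \eqref{equation} over $\mt^n$ and using the divergence--free conditions shows \eqref{initial} is preserved in time, so Poincaré's inequality $\|f\|_{H^\alpha}\lesssim\|\nabla f\|_{H^\alpha}$ is available for $f\in\{u,b\}$ throughout.

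\emph{The new dissipative mechanism.} The velocity is damped by $-\Delta u$, but $b$ carries no dissipation; this step recovers a degenerate one. For $\alpha\in[r+1,m)$ pair the $u$--equation with $\Lambda^{2(\alpha-1)}(\tilde b\cdot\nabla b)$, $\Lambda:=(-\Delta)^{1/2}$, and rewrite $\tilde b\cdot\nabla\partial_t b$ via the $b$--equation; since $\tilde b\cdot\nabla$ is skew--adjoint and commutes with $\Lambda$, this yields
\begin{align}\label{newdiss1}
\frac{d}{dt}\big\langle\Lambda^{\alpha-1}(\tilde b\cdot\nabla b),\,\Lambda^{\alpha-1}u\big\rangle=\|\tilde b\cdot\nabla b\|_{\dot H^{\alpha-1}}^2-\|\tilde b\cdot\nabla u\|_{\dot H^{\alpha-1}}^2+\mathcal R_\alpha^{0},
\end{align}
with $\mathcal R_\alpha^{0}$ cubic. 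Subtracting $\eta$ times \eqref{newdiss1} ($\eta>0$ small) from the $H^\alpha$ energy identity $\tfrac12\tfrac{d}{dt}(\|u\|_{H^\alpha}^2+\|b\|_{H^\alpha}^2)+\|\nabla u\|_{H^\alpha}^2=\mathcal R_\alpha^{1}$, the indefinite term $\eta\|\tilde b\cdot\nabla u\|_{\dot H^{\alpha-1}}^2\le\eta|\tilde b|^2\|\nabla u\|_{\dot H^\alpha}^2$ is absorbed by the velocity dissipation, the modified energy $\mathcal E_\alpha:=\|u\|_{H^\alpha}^2+\|b\|_{H^\alpha}^2-2\eta\langle\Lambda^{\alpha-1}(\tilde b\cdot\nabla b),\Lambda^{\alpha-1}u\rangle$ stays comparable to $\|u\|_{H^\alpha}^2+\|b\|_{H^\alpha}^2$, and (using Poincaré for $u$)
\begin{align}\label{newdiss2}
\frac{d}{dt}\mathcal E_\alpha+c\big(\|u\|_{H^\alpha}^2+\|\tilde b\cdot\nabla b\|_{\dot H^{\alpha-1}}^2\big)\le C\big(|\mathcal R_\alpha^{0}|+|\mathcal R_\alpha^{1}|\big).
\end{align}
By the Diophantine condition \eqref{Diophantine}, $\|\tilde b\cdot\nabla b\|_{\dot H^{\alpha-1}}^2\ge c^2\|b\|_{\dot H^{\alpha-1-r}}^2$, which is a genuine Sobolev norm exactly because $\alpha\ge r+1$ --- the range in \eqref{finaldecay}. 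Thus $b$ dissipates at every level $\alpha\in[r+1,m)$, at the cost of only $r$ derivatives, hence already at comparatively low regularity.

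\emph{Closing the bootstrap.} Run the argument on (H1) $\sup_{[0,T]}\|(u,b)\|_{H^m}\le C_1\varepsilon$ and (H2) $\sup_{[0,T]}\sup_{r+1\le\alpha\le m}(1+t)^{\frac{m-\alpha}{2(r+1)}}\|(u,b)(t)\|_{H^\alpha}\le C_2\varepsilon$. For (H1): the $H^m$ energy identity, using the exact cancellation $\langle\partial^\gamma(\tilde b\cdot\nabla b),\partial^\gamma u\rangle+\langle\partial^\gamma(\tilde b\cdot\nabla u),\partial^\gamma b\rangle=0$ and Kato--Ponce estimates on the transport terms, yields $\tfrac{d}{dt}(\|u\|_{H^m}^2+\|b\|_{H^m}^2)+2\|\nabla u\|_{H^m}^2\lesssim\|(\nabla u,\nabla b)\|_{L^\infty}\|(u,b)\|_{H^m}^2$; since $r>n/2$ for $n\ge2$ one has $\|(\nabla u,\nabla b)\|_{L^\infty}\lesssim\|(u,b)\|_{H^{r+1}}$, whose $(1+t)^{-\frac{m-r-1}{2(r+1)}}$ decay under (H2) is time--integrable exactly when $m>3(r+1)$ --- this is where the regularity threshold enters --- giving $\|(u,b)(t)\|_{H^m}^2\le\varepsilon^2+CC_1^2C_2\varepsilon^3$ and improving (H1). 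For (H2): the cubic remainders in \eqref{newdiss2} are bounded by $\|(u,b)\|_{H^{r+1}}\mathcal E_\alpha\lesssim C_2\varepsilon(1+t)^{-\frac{m-r-1}{2(r+1)}}\mathcal E_\alpha$ with integrable weight, so a bounded Grönwall factor reduces \eqref{newdiss2} to $\tfrac{d}{dt}\widetilde{\mathcal E}_\alpha+c\|u\|_{H^\alpha}^2+c\|b\|_{\dot H^{\alpha-1-r}}^2\le0$ with $\widetilde{\mathcal E}_\alpha\sim\mathcal E_\alpha$; interpolating $\|b\|_{H^\alpha}\lesssim\|b\|_{\dot H^{\alpha-1-r}}^{\theta}\|b\|_{H^m}^{1-\theta}$, $\theta=\tfrac{m-\alpha}{m-\alpha+r+1}$, and inserting (H1) turns the dissipation into $\gtrsim(C_1\varepsilon)^{-2(1-\theta)/\theta}\widetilde{\mathcal E}_\alpha^{1/\theta}$, so $\widetilde{\mathcal E}_\alpha'+c(C_1\varepsilon)^{-2(1-\theta)/\theta}\widetilde{\mathcal E}_\alpha^{1/\theta}\le0$ integrates to $\widetilde{\mathcal E}_\alpha(t)\lesssim(C_1\varepsilon)^2(1+t)^{-\frac{m-\alpha}{r+1}}$ (for $t\le1$ use $\mathcal E_\alpha(t)\le\mathcal E_\alpha(0)\lesssim\varepsilon^2$). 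Taking square roots improves (H2) once $C_2$ exceeds a fixed multiple of $C_1$; fixing $C_1$, then $C_2$, then $\varepsilon$ small closes the continuity argument and yields \eqref{smallcondition2}--\eqref{finaldecay} globally. Uniqueness follows from a Grönwall bound on the $L^2$ difference of two solutions.

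\emph{Main obstacle.} The crux is the degeneracy of the magnetic dissipation: it is only unlocked after paying $r$ derivatives through \eqref{Diophantine}, so the budget $m>3(r+1)$ is tight and every step must respect it. One must (a) identify the cross functional \eqref{newdiss1} at each intermediate order $\alpha$, not only at the top; (b) check that every nonlinear remainder factors as (a norm at level $\le r+1$, decaying at the integrable rate) times top--order norms, which is possible exactly because $r>n/2$ for $n\ge2$; and (c) perform the interpolation against the genuine smallness bound (H1) rather than the a priori unknown decay constant, so the bootstrap constants can be closed. In the case $\mu=0$, $\nu=1$ the cross--term remainder is instead balanced against the magnetic dissipation $\|\nabla b\|_{\dot H^\alpha}^2$, which works after one integration by parts.
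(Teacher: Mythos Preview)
Your proposal is correct and shares the paper's core strategy: a continuity argument in which the $H^m$ Gr\"onwall is closed by the time-integrable decay of $\|(u,b)\|_{H^{r+1}}$, and that decay is produced by augmenting the energy with the cross term $\langle\Lambda^{\alpha-1}(\tilde b\cdot\nabla b),\Lambda^{\alpha-1}u\rangle$ (the paper, working in the case $\mu=0$, $\nu=1$, uses the mirror quantity $-\int(\tilde b\cdot\nabla u)\cdot\Lambda^{s}b\,dx$), whose sign-definite part $\|\tilde b\cdot\nabla b\|_{\dot H^{\alpha-1}}^2$ becomes genuine dissipation $\|b\|_{\dot H^{\alpha-1-r}}^2$ via the Diophantine condition. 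The technical routes differ in two places. To convert the dissipative inequality into algebraic decay you interpolate $\|b\|_{H^\alpha}\le C\|b\|_{\dot H^{\alpha-1-r}}^{\theta}\|b\|_{H^m}^{1-\theta}$ and integrate the resulting ODE $\widetilde{\mathcal E}_\alpha'+c\,\widetilde{\mathcal E}_\alpha^{1/\theta}\le 0$ directly at each level $\alpha$; the paper instead performs a Fourier-side frequency splitting (inequality \eqref{AM}) together with a time-dependent weight $M(t)=A+c^*t/(8j)$, obtains decay only at $\alpha=r+1$, and then interpolates against $H^m$ for general $\alpha$. For the cubic remainders you rely on (H2) at level $r+1$ to bound an integrating factor, whereas the paper absorbs them directly into the dissipation through a trilinear interpolation (\eqref{inter1}--\eqref{inter2}), needing only the $H^m$ smallness (H1) for that step. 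Both variants are standard and yield the same rates; yours is arguably more streamlined, the paper's decay lemma is more self-contained in that it does not feed (H2) back into itself. One small correction: your cross-term identity omits the linear contribution $\langle\Delta u,\Lambda^{2(\alpha-1)}(\tilde b\cdot\nabla b)\rangle$ coming from the viscous term in the $u$-equation, so $\mathcal R_\alpha^{0}$ is not purely cubic; this is harmless, since by Young the extra term is bounded by $\tfrac12\|\nabla u\|_{\dot H^\alpha}^2+\tfrac12\|\tilde b\cdot\nabla b\|_{\dot H^{\alpha-1}}^2$ and both pieces are absorbed for $\eta$ small.
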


\begin{remark}\label{rmk3}
Theorem \ref{thm} {\it greatly} reduces the regularity requirement on the initial data of \cite{chen-2022-3dmhd-Diophant} and \cite{zhai-2023-2dmhdstability-Diophant} from $H^{4r+7}(\mathbb{T}^3)$
and $H^{(3+2\beta)r+5+(\alpha+2\beta)}(\mathbb{T}^2)$ to $H^{3(r+1)^+}(\mathbb{T}^n)$ for $r>n-1$, $\alpha>0$ and $\beta>0$ when $n=3$ and $n=2$ respectively.
\end{remark}

\begin{remark}\label{rmk1}
Compared with \eqref{de1} and \eqref{de2}, in the Sobolev spaces with {\it much lower} regularity $H^{r+1}(\mt^n)$, we obtain the {\it effective} decay estimate.
\end{remark}

\begin{remark}\label{rmk2}
The decay estimates obtained in \eqref{finaldecay} are {\it sharp} for the nonlinear system \eqref{equation} in the sense that they are in line with those for the linearized system \eqref{lineark}.
\end{remark}

\begin{remark}\label{rmk5}
Theorem \ref{thm} holds for any dimensions $n$, and the differences between $n$ are reflected in the initial regularity requirement and decay estimates \eqref{finaldecay}, because both of them are determined by the parameter $r+1$, where $r+1>n$.
\end{remark}

\begin{remark}\label{rmk7}
As pointed out in \cite{Alinhac} and \cite{chen-2022-3dmhd-Diophant}, the vectors consists of {\it rational numbers} or {\it at least one zero} are not contained in the Diophantine condition. Thus, our results are not contained in \cite{panGlobalClassicalSolutions2018,zhou-2018-GlobalClassicalSolutions,zhang-2020-GlobalWellPosedness2D} concerning the global well-posedness of small solution or solution close to the strong background magnetic field $\overline{b}=(0,...,0,1)\in\mr^n$ on periodic domain.
\end{remark}

\begin{remark}\label{rmk6}
The proofs for the case $\mu=1$, $\nu=0$ are quite similar with the case $\mu=0$, $\nu=1$. In fact, it suffices to modify Lemma \ref{lem3} by replacing
\begin{align*}
	-\frac{\dd}{\dd t}\int_{\mathbb{T}^n}\left(\tb\cdot\nabla u\right)\cdot \Lambda^s b\dd x&\leq \frac{3}{2}\left\|\Lambda^{\frac{s}{2}+2}b\right\|_{L^2}-\frac{1}{2}\left\|\Lambda^{\frac{s}{2}}\left(\tb\cdot\nabla u\right)\right\|_{L^2}\\
	&~~~~+\left(\|\nabla b\|_{L^\infty}+\|\nabla u\|_{L^\infty}\right) \left(\|\Lambda^{\frac{s}{2}+1} b\|_{L^2}^2+\|\Lambda^{\frac{s}{2}+1} u\|_{L^2}^2\right),
\end{align*}
with
\begin{align*}
	-\frac{\dd}{\dd t}\int_{\mathbb{T}^n}\left(\tb\cdot\nabla b\right)\cdot \Lambda^s u\dd x&\leq \frac{3}{2}\left\|\Lambda^{\frac{s}{2}+2}u\right\|_{L^2}-\frac{1}{2}\left\|\Lambda^{\frac{s}{2}}\left(\tb\cdot\nabla b\right)\right\|_{L^2}\\
	&~~~~+\left(\|\nabla b\|_{L^\infty}+\|\nabla u\|_{L^\infty}\right) \left(\|\Lambda^{\frac{s}{2}+1} b\|_{L^2}^2+\|\Lambda^{\frac{s}{2}+1} u\|_{L^2}^2\right).
\end{align*}
To avoid duplication, we only provide the proofs for the case $\mu=0$, $\nu=1$ in Subsection \ref{4/3}.
\end{remark}

\noindent{\bf Methodology:} In the following part, we will explain why we can {\it greatly reduce} the regularity of Sobolev spaces in which the decay estimates and stability result of solutions hold. According to \eqref{nablam}, to establish the global stability of solutions of the system \eqref{equation} in the framework of $H^m(\mt^n)$, the most important point is how to control the term
\begin{equation}\label{ana1}
 \int_{0}^{\infty}\|\nabla u(\tau)\|_{L^\infty(\mathbb{T}^n)}+\|\nabla b(\tau)\|_{L^\infty(\mathbb{T}^n)}\dd \tau.
\end{equation}
Based on classical Sobolev embedding, it holds that
\begin{equation}\label{ana2}
 H^s(\mathbb{T}^n)\hookrightarrow L^\infty(\mathbb{T}^n)\quad {\rm for\,\,\,any}\,\,\,s>\frac n2+1.
\end{equation}
Therefore, by combing \eqref{ana1} and \eqref{ana2}, we know that the core to break through this issue lies in how to find out a suitable Sobolev space $H^s(\mathbb{T}^n)$ for $s>\frac n2+1$ whose decay rate in time is {\it faster than} $(1+t)^{-1}$\,? The lower the value of $s$ is, the better result will be obtained.

However, due to lack of dissipation or damping terms in the equations \eqref{equation}, we can not bound the time integrals of $\|\nabla u\|_{L^\infty(\mathbb{T}^n)}$ and $\|\nabla b\|_{L^\infty(\mathbb{T}^n)}$ directly. The key point to solve the problem is exploiting the hidden dissipation
in the system \eqref{equation}. In the aforementioned works, for the case $\mu=0$ and $\nu=1$, under the assumptions
\eqref{smallcondition2}, the authors in \cite{chen-2022-3dmhd-Diophant} obtained the following dissipative inequality
\begin{align}\notag
   & \frac{\dd}{\dd t}\left(B\left(\|u\|_{H^{r+4}}^2+\|b\|_{H^{r+4}}^2\right)-\sum_{0\leq s\leq{r+3}}\int_{\mathbb{T}^3}\Lambda^s\left(\tb\cdot\nabla u\right)\cdot \Lambda^s b\dd x\right)\label{keyCZZ}\\
   &\,\,\,+\frac{1}{2}\left(B\|\nabla b\|_{H^{r+4}}^2+\frac{1}{2}\|\tilde{b}\cdot\nabla u\|_{{H}^{r+3}}^2\right)\leq 0.
\end{align}
Based on \eqref{keyCZZ}, the authors established the decay rates
\begin{equation}\label{de10}
 \|u(t)\|_{H^{r+4}(\mathbb{T}^3)}+\|b(t)\|_{H^{r+4}(\mathbb{T}^3)}\leq C(1+t)^{-\frac{3}{2}},
\end{equation}
which helped them proving the stability result in $H^{(4r+7)}(\mathbb{T}^3)$. In the subsequent work \cite{zhai-2023-2dmhdstability-Diophant}, for 2D periodic domain $\mathbb{T}^2$ and any $\alpha>0$ and $\beta>0$, the author improved the decay rates \eqref{de10} to
\begin{equation}\label{de100}
 \|u(t)\|_{H^{r+\alpha+3}(\mathbb{T}^2)}+\|b(t)\|_{H^{r+\alpha+3}(\mathbb{T}^2)}\leq C(1+t)^{-(\beta+1)},
\end{equation}
by establishing the different dissipative inequality
\begin{align}\notag
   & \frac{\dd}{\dd t}\left(B\left(\|u\|_{H^{r+\alpha+3}}^2+\|b\|_{H^{r+\alpha+3}}^2\right)-\sum_{0\leq s\leq r+\alpha+2}\int_{\mathbb{T}^2}\Lambda^s\left(\tb\cdot\nabla u\right)\cdot \Lambda^s b\dd x\right)\label{keyzhai}\\
   &\,\,\,+\frac{1}{2}\left(B\|\nabla b\|_{H^{r+\alpha+3}}^2+\|\tilde{b}\cdot\nabla u\|_{{H}^{r+\alpha+2}}^2\right)\leq 0.
\end{align}
This faster decay rates \eqref{de100} enabled the author getting the stability result in Sobolev spaces with lower regularity.
Here, $B>0$ is a constant and the fractional Laplacian operator $\Lambda^s$ is defined via Fourier transform
$$\w{\Lambda^sf}(k)=|k|^{s}\w{f}(k).$$

Be quite different from \cite{chen-2022-3dmhd-Diophant} and \cite{zhai-2023-2dmhdstability-Diophant}, in current work, we find out a {\it new} dissipative-like inequality
\begin{align}
&\frac{\dd}{\dd t}\left(A\left(\|\Lambda^{r+1} u\|_{L^2}^2+\|\Lambda^{r+1} b\|_{L^2}^2\right)-\int_{\mathbb{T}^n}\left(\tb\cdot\nabla u\right)\cdot \Lambda^{2r} b\dd x\right)\label{keyJLX}\\
\leq&  -\frac{c^*}{8M}\left(A\left(\|\Lambda^{r+1} u\|_{L^2}^2+\|\Lambda^{r+1} b\|_{L^2}^2\right)-\int_{\mathbb{T}^n}\left(\tb\cdot\nabla u\right)\cdot \Lambda^{2r} b\dd x\right)+\underbrace{\frac{C}{4M^{1+\frac{m-(r+1)}{r+1}}}\|\Lambda^{m}u\|_{L^2}^2}_{I},\notag
\end{align}
by making effective use of the Diophantine condition and the characteristics of the periodic functions, Fourier analysis and establishing very delicate a priori estimates, where $A=\frac{|\tb|}{2}+1$, $M=A+\frac{c^*t}{8j}$ ($j$ is a suitably large constant), $c^*=\min\{1,c\}$ ($c$ is the constant in \eqref{Diophantine}) and $m\geq 2r+\frac{n}{2}+3$. Thus, we get desired dissipative effect in \eqref{keyJLX}
and the price to pay is the appearance of forcing term $I$. Fortunately, it can be well controlled by constructing the suitable Lyapunov functional
\begin{align*}
	F(t)&\triangleq A\left(\|\Lambda^{\frac{s}{2}+1} u(t)\|_{L^2}^2+\|\Lambda^{\frac{s}{2}+1} b(t)\|_{L^2}^2\right)-\int_{\mathbb{T}^n}\left(\tb\cdot\nabla u(t)\right)\cdot \Lambda^s b(t)\dd x\\
&\geq \left(\|\Lambda^{\frac{s}{2}+1} u\|_{L^2}^2+\|\Lambda^{\frac{s}{2}+1} b\|_{L^2}^2\right),
\end{align*}
with the help of which, we succeed to establish the following decay rate
\begin{align}\label{lowerlower}
		\|u(t)\|_{H^{r+1}(\mathbb{T}^n)}+\|b(t)\|_{H^{r+1}(\mathbb{T}^n)}\leq C(1+t)^{-\frac{m-(r+1)}{2(r+1)}}.
\end{align}
In fact, there are much more details in the process of getting \eqref{lowerlower}, the readers can see Lemma \ref{lem4}
for details. To guarantee that the decay rate in \eqref{lowerlower} is faster than $(1+t)^{-1}$, $m>3(1+r)$ must be true for any dimension $n$, while the corresponding indices in \eqref{de10} and \eqref{de100} are $r+4$ and $r+\alpha+3$ respectively.
It means that we establish the decay estimates in Sobolev spaces with {\it much lower} regularity, which enables us to get the stability result in Sobolev spaces with {\it much lower} regularity.

In addition, to figure out if the decay estimates \eqref{lowerlower} for the nonlinear system is optimal, we first study the linear stability problem. Eventually, by using the method of spectral analysis, for the solutions of linearized system $(U,B)$, we establish the decay rates
\begin{align}\label{lineark0}
\|U\|_{H^{r+1}(\mathbb{T}^n)}+\|B\|_{H^{r+1}(\mathbb{T}^n)}\leq  C (1+t)^{-\frac{m-{(r+1)}}{2(1+r)}},
\end{align}
which show that \eqref{lowerlower} is sharp in the sense that they are in line with \eqref{lineark0}. It is worth noting that according to the Diophantine condition, $r+1>n=\frac{n}{2}+\frac{n}{2}$. Therefore, when $n=2$, $s=r+1$ is the minimum value such that \eqref{ana2} holds. Viewed in this light, for the 2D system, the space $H^{r+1}(\mathbb{T}^2)$ in which the effective decay starts  is {\it optimal} by current research methods.

This paper is organized as follows. In Section 2, plenty of helpful lemmas will be introduced. In Section 3, we concentrate on establishing the linear stability. Section 4 is devoted to the proof of nonlinear stability results.

\section{Preliminaries}\label{2}

\,\,\,\,\,\,\,\,This Section serves as a preparation, which lists several inequalities to be used in the proofs of Theorem \ref{thm} and \ref{thm1}. The first provides a Poincaré-type inequality.

\begin{lemma}\label{2.1}
Assume $\tb\in\mr^n$ satisfies the Diophantine condition \eqref{Diophantine} and $f\in \dot{H}^{s+r+1}$, then for any $s>0$, there holds
	\begin{align}\label{D1}
		\|f\|_{\dot{H}^s}\leq C \|\tilde{b}\cdot\nabla f\|_{\dot{H}^{s+r}}.
	\end{align}
If, in addition $\int_{\mathbb{T}^n}f\dd x=0$, it holds
	\begin{align}\label{D2}
	\|f\|_{L^2}\leq C \|\tilde{b}\cdot\nabla f\|_{\dot{H}^{r}}.
\end{align}
\begin{proof}
According to Plancherel formula, it follows that
\begin{align*}
	\|\tilde{b}\cdot\nabla f\|_{\dot{H}^{s+r}}^2&=\sum_{k\in\mz^n}|k|^{2s+2r}|\tilde{b}\cdot k|^2|\w{f}(k)|^2\\
	&=\sum_{k\in\mz^n\backslash\{0\}}|k|^{2s+2r}|\tilde{b}\cdot k|^2|\w{f}(k)|^2\\
	&\geq c\sum_{k\in\mz^n\backslash\{0\}}|k|^{2s+2r}|k|^{-2r}|\w{f}(k)|^2\\
	&=c\sum_{k\in\mz^n\backslash\{0\}}|k|^{2s}|\w{f}(k)|^2.
\end{align*}
If $s>0$, one has
\begin{align*}
	\sum_{k\in\mz^n\backslash\{0\}}|k|^{2s}|\w{f}(k)|^2= \sum_{k\in\mz^n}|k|^{2s}|\w{f}(k)|^2=\|f\|_{\dot{H}^s}^2.
\end{align*}
If $s=0$ and $\int_{\mathbb{T}^n}f\dd x=0$, we have
\begin{align*}
	\sum_{k\in\mz^n\backslash\{0\}}|k|^{2s}|\w{f}(k)|^2=\sum_{k\in\mz^n\backslash\{0\}}|\w{f}(k)|^2=\sum_{k\in\mz^n}|\w{f}(k)|^2=\|f\|_{L^2},
\end{align*}
which completes the proof.
\end{proof}
\end{lemma}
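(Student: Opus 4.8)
The plan is to pass to the Fourier side, where $\tb\cdot\nabla$ is simply the Fourier multiplier with symbol $\mathrm{i}(\tb\cdot k)$, so that the entire estimate reduces to a mode-by-mode inequality for the Fourier coefficients $\w f(k)$, $k\in\mz^n$. First I would write $\w{\tb\cdot\nabla f}(k)=\mathrm{i}(\tb\cdot k)\w f(k)$ and apply Plancherel's identity to get
\begin{equation*}
\|\tb\cdot\nabla f\|_{\dot H^{s+r}}^2=\sum_{k\in\mz^n}|k|^{2(s+r)}|\tb\cdot k|^2|\w f(k)|^2=\sum_{k\neq0}|k|^{2(s+r)}|\tb\cdot k|^2|\w f(k)|^2,
\end{equation*}
the $k=0$ mode being annihilated by the weight $|k|^{2(s+r)}$.

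Next I would invoke the Diophantine condition \eqref{Diophantine} in the form $|\tb\cdot k|^2\geq c^2|k|^{-2r}$ for every $k\neq0$, which bounds the weight from below by $c^2|k|^{2s}$ and yields
\begin{equation*}
\|\tb\cdot\nabla f\|_{\dot H^{s+r}}^2\geq c^2\sum_{k\neq0}|k|^{2s}|\w f(k)|^2.
\end{equation*}
It then remains to identify the right-hand side with a Sobolev norm of $f$. If $s>0$, then $|k|^{2s}$ vanishes at $k=0$, so $\sum_{k\neq0}|k|^{2s}|\w f(k)|^2=\|f\|_{\dot H^s}^2$ and \eqref{D1} follows with $C=c^{-1}$. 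If instead $s=0$, the weight no longer suppresses the zero mode, but the hypothesis $\int_{\mt^n}f\dd x=0$ gives $\w f(0)=0$, hence $\sum_{k\neq0}|\w f(k)|^2=\|f\|_{L^2}^2$ and \eqref{D2} follows.

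I do not expect a genuine obstacle here: it is essentially a one-line application of the Diophantine lower bound on the Fourier side. The only point requiring care is the bookkeeping of the zero frequency, which is precisely what dictates the dichotomy in the statement — the homogeneous seminorm $\dot H^s$ automatically discards the mean of $f$ when $s>0$, whereas at $s=0$ one must impose mean-zero by hand. It is worth stressing that the derivative count is sharp: recovering $\|f\|_{\dot H^s}$ costs exactly $r$ extra derivatives on $\tb\cdot\nabla f$, matching the loss $|k|^{-r}$ in the Diophantine bound, and this precise balance is the dissipative-type mechanism that will be exploited repeatedly in the energy estimates of Section \ref{2} and beyond.
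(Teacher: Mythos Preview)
Your proposal is correct and follows essentially the same argument as the paper: pass to Fourier side via Plancherel, drop the $k=0$ mode (which is killed by the weight), apply the Diophantine lower bound $|\tb\cdot k|^2\geq c^2|k|^{-2r}$ to cancel the $|k|^{2r}$ factor, and then identify the resulting sum with $\|f\|_{\dot H^s}^2$ (for $s>0$) or $\|f\|_{L^2}^2$ (for $s=0$ with mean zero). The only cosmetic difference is that you track the constant as $c^{-1}$ explicitly, whereas the paper absorbs it into a generic $C$.
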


 The next lemma provides a commutator estimates that can be found in \cite{kato-1988-CommutatorEstimatesEuler} and \cite{kenig}.

\begin{lemma}\label{jiaohuanzi}
	Let $s>0,1<p<\infty$ and $\frac{1}{p}=\frac{1}{p_1}+\frac{1}{q_1}=\frac{1}{p_2}+\frac{1}{q_2}$.
	\begin{itemize}
		\item For any $f\in W^{1,p_1}\cap W^{s,q_2},g\in L^{p_2}\cap W^{s-1,q_1}$, there exists an absolute constant $C$ such that
		\begin{align*}
		\left\|\Lambda^s(f g)-f \Lambda^s g\right\|_{L^p} \leq C\left(\|\nabla f\|_{L^{p_1}}\left\|\Lambda^{s-1} g\right\|_{L^{q_1}}+\|g\|_{L^{p_2}}\left\|\Lambda^s f\right\|_{L^{q_2}}\right).
		\end{align*}
	\item  If $f\in L^{p_1}\cap W^{s,q_2},g\in L^{p_2}\cap W^{s,q_1}$, there is an absolute constant $C$ such that
	\begin{align*}
	\left\|\Lambda^s(f g)\right\|_{L^p} \leq C\left(\|f\|_{L^{p_1}}\left\|\Lambda^s g\right\|_{L^{q_1}}+\|g\|_{L^{p_2}}\left\|\Lambda^s f\right\|_{L^{q_2}}\right).
	\end{align*}
\end{itemize}
\end{lemma}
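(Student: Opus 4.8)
The statement to prove is Lemma~\ref{jiaohuanzi}, the Kato--Ponce type commutator and product estimates.

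\medskip

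\textbf{Plan of proof.} The strategy is the classical Bony paraproduct decomposition together with Littlewood--Paley theory, which is exactly how these estimates are proved in \cite{kato-1988-CommutatorEstimatesEuler, kenig}. I would first introduce a Littlewood--Paley partition of unity, writing $f=\sum_j \Delta_j f$ and $g=\sum_j \Delta_j g$, with $S_j$ the associated low-frequency cutoffs. For the first (commutator) estimate, I would decompose the product $fg$ into the three Bony pieces: the paraproduct $T_f g=\sum_j S_{j-1}f\,\Delta_j g$, the symmetric paraproduct $T_g f$, and the resonant term $R(f,g)=\sum_{|j-j'|\le 1}\Delta_j f\,\Delta_{j'} g$. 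The commutator $\Lambda^s(fg)-f\Lambda^s g$ is then reorganized accordingly; the only genuinely delicate block is $[\Lambda^s, T_f]g$, where one exploits the first-order mean-value/Taylor cancellation: on the support of $S_{j-1}f\,\Delta_j g$ the frequency of $f$ is much smaller than that of $g$, so $\Lambda^s$ applied to the product differs from $S_{j-1}f$ times $\Lambda^s\Delta_j g$ by a term carrying one derivative off $f$ and $s-1$ derivatives off $g$. Quantitatively this is handled by a first-order Taylor expansion of the symbol $|\xi+\eta|^s$ around $\xi$, producing the kernel bound that yields $\|\nabla f\|_{L^{p_1}}\|\Lambda^{s-1}g\|_{L^{q_1}}$ after summing a geometric series in $j$ (using $s>0$ for convergence on the high-frequency side) and applying H\"older together with the Bernstein inequalities. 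The remaining blocks $\Lambda^s T_g f$, $T_{\Lambda^s g} f$ and $\Lambda^s R(f,g)$, $R(\Lambda^s g, f)$ are estimated directly, with no cancellation needed: each derivative count matches one of the two terms on the right-hand side, and one invokes the Littlewood--Paley square-function characterization of $L^p$ for $1<p<\infty$ (this is where the hypothesis $1<p<\infty$ is essential, via the Fefferman--Stein maximal inequality) to reassemble the dyadic sums.

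\medskip

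For the second (product) estimate, the argument is the same but easier: there is no cancellation to extract. One writes $\Lambda^s(fg)=\Lambda^s(T_f g)+\Lambda^s(T_g f)+\Lambda^s R(f,g)$ and bounds each: in $\Lambda^s T_f g$ all $s$ derivatives land on the high-frequency factor $g$, giving $\|f\|_{L^{p_1}}\|\Lambda^s g\|_{L^{q_1}}$; symmetrically for $\Lambda^s T_g f$; and in the resonant term $\Lambda^s R(f,g)$ one distributes the derivatives onto whichever factor is convenient (here $s>0$ ensures the high-frequency sum converges), again landing on one of the two stated terms after H\"older and the square-function estimate. Summing over the three pieces gives the claim.

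\medskip

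\textbf{Main obstacle.} The technical heart is the paraproduct commutator $[\Lambda^s, T_f]g$: one must make precise the heuristic ``$\Lambda^s$ commutes with multiplication by a low-frequency function up to lower order'' by expanding the Fourier symbol and controlling the resulting convolution kernel in $L^1$ uniformly in the dyadic scale, then summing. Everything else is bookkeeping with H\"older's inequality, Bernstein's inequalities, and the $L^p$ boundedness of the Littlewood--Paley square function for $1<p<\infty$. Since this lemma is quoted verbatim from \cite{kato-1988-CommutatorEstimatesEuler} and \cite{kenig}, in the paper itself it is legitimate simply to cite those references rather than reproduce the full Littlewood--Paley argument.
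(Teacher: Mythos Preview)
Your proposal is correct and matches the paper's treatment: the paper does not prove this lemma but simply cites \cite{kato-1988-CommutatorEstimatesEuler} and \cite{kenig}, exactly as you observe in your final paragraph. The Littlewood--Paley/Bony paraproduct outline you give is the standard route in those references, so nothing is missing.
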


Finally we state an interpolation inequality which comes from \cite{brezis-2019-gag}.

\begin{lemma}\label{chazhi1}
	If $\int_{\mathbb{T}^n}f(x,t)\dd x=0$, there holds that
	\begin{align*}
		\|\Lambda^rf\|_{L^q}\leq 	\|\Lambda^{s_1}f\|^{\theta}_{L^{p_1}}	\|\Lambda^{s_2}f\|^{1-\theta}_{L^{p_2}},
	\end{align*}
where the real numbers $0 \leq s_1 \leq s_2,~ r \geq 0$, $1 \leq p_1, p_2, q \leq \infty$, $\left(s_1, p_1\right) \neq\left(s_2, p_2\right)$ and $\theta \in(0,1)$ such that
\begin{align*}
	& r<s:=\theta s_1+(1-\theta) s_2,~ \frac{1}{q}=\left(\frac{\theta}{p_1}+\frac{1-\theta}{p_2}\right)-\frac{s-r}{n}.
\end{align*}
\end{lemma}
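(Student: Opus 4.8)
The plan is to prove the inequality of Lemma~\ref{chazhi1} by a Littlewood--Paley decomposition together with the Bernstein inequalities on the torus. Since $\int_{\mt^n}f\dd x=0$, the Fourier coefficients of $f$ are supported in $\mz^n\setminus\{0\}$, so I would write $f=\sum_{j\geq 0}\Delta_jf$ with $\Delta_j$ the usual dyadic frequency projectors (only finitely many blocks of non-positive index occur on $\mt^n$, and they are harmless). The heart of the matter is the single-block bound: for $i\in\{1,2\}$ and $q\geq p_i$,
\[
  \|\Lambda^r\Delta_jf\|_{L^q}\leq C\,2^{ja_i}\|\Lambda^{s_i}\Delta_jf\|_{L^{p_i}},\qquad a_i:=r-s_i+n\Big(\tfrac1{p_i}-\tfrac1q\Big),
\]
obtained by chaining the $L^{p_i}\!\to\!L^q$ Bernstein gain with the elementary identities $\|\Lambda^{\sigma}\Delta_jf\|_{L^{p_i}}\sim 2^{j\sigma}\|\Delta_jf\|_{L^{p_i}}$. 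The two scaling hypotheses $s=\theta s_1+(1-\theta)s_2$ and $\frac1q=\frac{\theta}{p_1}+\frac{1-\theta}{p_2}-\frac{s-r}{n}$ are exactly what make $\theta a_1+(1-\theta)a_2=0$; and because $(s_1,p_1)\neq(s_2,p_2)$ we have $a_1\neq a_2$, so after relabelling $a_1>0>a_2$.

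Granting the block bound (legitimate in the good range $q\geq\max\{p_1,p_2\}$), I would split the dyadic sum at a free threshold $j_0$, apply the $i=1$ estimate for $j\leq j_0$ and the $i=2$ estimate for $j>j_0$, sum the resulting geometric series, and minimise over $j_0\in\mr$. Since $-a_2/(a_1-a_2)=\theta$ --- which is precisely equivalent to $\theta a_1+(1-\theta)a_2=0$ --- the optimisation returns the exponents $\theta$ and $1-\theta$, i.e.\ the asserted inequality. In the range relevant to this paper, $p_1=p_2=2$ (and then $2\leq q\leq\infty$ since $r<s$), one argues even more cleanly: for $2\leq q<\infty$ combine the Littlewood--Paley square function estimate with Minkowski's inequality in $\ell^2(L^q)$, then the block bound, then Hölder in $j$ with exponents $1/\theta$ and $1/(1-\theta)$; for $q=\infty$ use instead the crude bound $\|\Lambda^rf\|_{L^\infty}\leq\sum_j\|\Lambda^r\Delta_jf\|_{L^\infty}$ followed by the $j_0$-splitting above. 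In either case Plancherel identifies $\sum_j\|\Lambda^{s_i}\Delta_jf\|_{L^2}^2$ with $\|\Lambda^{s_i}f\|_{L^2}^2$, closing the estimate with the sharp $L^2$ norms.

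The main obstacle is the general case in which $p_1$ or $p_2$ differs from $2$, where two difficulties appear. First, outside the good range, i.e.\ when $q<\max\{p_1,p_2\}$, the naive Bernstein estimate only produces $\theta a_1+(1-\theta)a_2>0$, so the dyadic sum no longer converges for free and one must exploit the fine structure of the blocks rather than a crude triangle inequality. Second, when $p_i\neq 2$ the norm $\|\Lambda^{s_i}f\|_{L^{p_i}}$ is governed not by $\sum_j\|\Lambda^{s_i}\Delta_jf\|_{L^{p_i}}$ but by a Triebel--Lizorkin square function, so one is forced to work at the level of $F^{s_i}_{p_i,2}$ and invoke the Fefferman--Stein vector-valued maximal inequality; moreover certain borderline configurations (the two endpoints sharing the same scaling, or $p_i\in\{1,\infty\}$ with $s_i-r\in\mz$) are genuinely delicate, and the strict hypothesis $r<s$ is what rules out the known counterexamples there. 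For these reasons, and since Lemma~\ref{chazhi1} is used here only as a black box, one may simply invoke \cite{brezis-2019-gag}; the Littlewood--Paley argument sketched above already gives a self-contained proof over the range of exponents that actually occurs in this paper.
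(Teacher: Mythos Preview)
The paper does not prove Lemma~\ref{chazhi1} at all: it is stated as a quotation from \cite{brezis-2019-gag} and used as a black box. Your proposal therefore goes well beyond what the paper does. The Littlewood--Paley sketch you give is essentially the standard route to such Gagliardo--Nirenberg inequalities and is correct in the regime $p_1=p_2=2$ that the paper actually uses (in Lemma~\ref{lem4}, to bound $\|\nabla u\|_{L^\infty}$ and $\|\Lambda^{\frac{s}{2}+1}u\|_{L^2}^2$), where your verification that $\theta a_1+(1-\theta)a_2=0$ and the $j_0$-splitting argument go through cleanly. One minor slip: the assertion that $(s_1,p_1)\neq(s_2,p_2)$ forces $a_1\neq a_2$ is not quite right in general, since $a_1-a_2=(s_2-s_1)+n(\tfrac1{p_1}-\tfrac1{p_2})$ can vanish for distinct pairs; but when $p_1=p_2$ this reduces to $s_1\neq s_2$, which does hold, so the issue does not affect the range you need. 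For the full generality of the statement you rightly fall back on \cite{brezis-2019-gag}, which is exactly what the paper does from the outset.
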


\section{Linear stability}\label{2/3}
\subsection{Integral presentation}\label{3/1}

\,\,\,\,\,\,\,\,As mentioned earlier, to establish the integral presentation of system \eqref{equation}, we need, to find out its hidden wave structure. First of all, we deal with the case $\mu=0$, $\nu=1$.
Through differentiating the velocity equations $\eqref{equation}_1$ with respect to $t$ and applying the Helmholltz-Leray projection $\mathbb{P}=\mathrm{Id}-\nabla{{\Delta}^{-1}}\nabla\cdot$ to it, taking several substitutions and regrouping linear and nonlinear terms, it follows that
\begin{align}\label{uwave}
	\partial_{tt}u&=\partial_t(\tilde{b}\cdot\nabla b)+\partial_t \mathbb{P}\left(b\cdot\nabla b-u\cdot \nabla u\right)\notag\\
	&=\tilde{b}\cdot\nabla \partial_tb+\partial_t \mathbb{P}\left(b\cdot\nabla b-u\cdot \nabla u\right)\notag\\
	&=\tilde{b}\cdot\nabla\left(\Delta b-u \cdot \nabla b+\tb \cdot \nabla u+b \cdot \nabla u\right)+\partial_t \mathbb{P}\left(b\cdot\nabla b-u\cdot \nabla u\right)\\
	&=\Delta\left(\partial_t u-\mathbb{P}\left(b \cdot \nabla b-u\cdot \nabla u\right)\right)+(\tilde{b}\cdot\nabla)^2u+\partial_t \mathbb{P}\left(b\cdot\nabla b-u\cdot \nabla u\right)\notag\\
	&~~~~~~+\tilde{b}\cdot\nabla(b \cdot \nabla u-u \cdot \nabla b)\notag\\
	&=\partial_t\Delta u+(\tilde{b}\cdot\nabla)^2u+\tilde{b}\cdot\nabla(b \cdot \nabla u-u \cdot \nabla b)+(\partial_t-\Delta)\mathbb{P}\left(b\cdot\nabla b-u\cdot \nabla u\right),\notag
\end{align}
Using a similar process as \eqref{uwave}, we also have
\begin{align}
	\partial_{tt}b=\partial_t\Delta b+(\tilde{b}\cdot\nabla)^2b+\partial_t(b\cdot\nabla u-u\cdot\nabla b)+(\tilde{b}\cdot\nabla)\mathbb{P}(b\cdot\nabla b-u\cdot\nabla u).\label{uwave1}
\end{align}
Combining \eqref{uwave} and \eqref{uwave1} leads to
\begin{align}\label{Requation}
	\left\{\begin{array}{l}
\partial_{tt}u=\partial_t\Delta u+(\tilde{b}\cdot\nabla)^2u+F_1,\\
	\partial_{tt}b=\partial_t\Delta b+(\tilde{b}\cdot\nabla)^2b+F_2 .\\
	\nabla\cdot u=\nabla \cdot b=0, \\
	u(x, 0)=u_0(x), b(x, 0)=b_0(x),
\end{array}\right.
\end{align}
where $F_1$ and $F_2$ are nonlinear terms:
\begin{align}
	F_1&=\tilde{b}\cdot\nabla(b \cdot \nabla u-u \cdot \nabla b)+(\partial_t-\Delta)\mathbb{P}\left(b\cdot\nabla b-u\cdot \nabla u\right),\label{case1}\\
	F_2&=\partial_t(b\cdot\nabla u-u\cdot\nabla b)+(\tilde{b}\cdot\nabla)\mathbb{P}(b\cdot\nabla b-u\cdot\nabla u).\label{case12}
\end{align}

As for the case  $\mu=1$, $\nu=0$, the form of wave equations are same as \eqref{Requation}, the only difference lies in that $F_1$ and $F_2$ should be replaced by $G_1$ and $G_2$ respectively, where
\begin{align}
	G_1&=\tilde{b}\cdot\nabla\mathbb{P}(b \cdot \nabla b-u \cdot \nabla u)+(\partial_t-\Delta)\left(b\cdot\nabla u-u\cdot \nabla b\right),\label{case2}\\
	G_2&=\partial_t\mathbb{P}(b\cdot\nabla b-u\cdot\nabla u)+(\tilde{b}\cdot\nabla)(b\cdot\nabla b-u\cdot\nabla u).\label{case21}
\end{align}

For both cases $\mu=0$, $\nu=1$ and $\mu=1$, $\nu=0$, the corresponding linearized system is given by
\begin{align}\label{linear}
	\left\{\begin{array}{l}
		\partial_{tt}U=\partial_t\Delta U+(\tilde{b}\cdot\nabla)^2U,\\
		\partial_{tt}B=\partial_t\Delta B+(\tilde{b}\cdot\nabla)^2B.\\
		\nabla\cdot U=\nabla \cdot B=0, \\
		U(x, 0)=U_0(x), B(x, 0)=B_0(x).
	\end{array}\right.
\end{align}

To better understand the linearized system \eqref{linear}, it is necessary to thoroughly study the following inhomogeneous equation in ${\mathbb{T}^n}$,
\begin{align}\label{linear1}
	\partial_{tt}\phi(x,t)=\partial_t\Delta \phi(x,t)+(\tilde{b}\cdot\nabla)^2\phi(x,t)+f(x,t),
\end{align}
with
\begin{align}\label{linear2}
	\phi(x,0)=\phi_0(x),\quad \partial_t\phi(x,0)=\phi_1(x).
\end{align}

The first step is converting \eqref{linear1}-\eqref{linear2} into an integral form via the method of spectral analysis. To this end, for any $t>0$ and $k\in\mathbb{Z}^n$,
we define following two operators $L_1(t)$ and $L_2(t)$ as
\begin{align}
	\w{L_1(t)\phi}(k,t)=\frac{1}{2}\left(\e^{(-\frac{|k|^2}{2}+\sigma)t}+\e^{(-\frac{|k|^2}{2}-\sigma)t}\right)\w{\phi}(k,t),\label{3.0}
\end{align}
and
\begin{align}
	\w{L_2(t)\phi}(k,t)=\frac{1}{2\sigma}\left(\e^{(-\frac{|k|^2}{2}+\sigma)t}-\e^{(-\frac{|k|^2}{2}-\sigma)t}\right)\w{\phi}(k,t),\label{3.01}
\end{align}
where $\sigma=\sqrt{\frac{|k|^4}{4}-(\tilde{b}\cdot k)^2}$ and $k\in{\mathbb{Z}^n}$.

\begin{lemma}\label{lem1}
Suppose $\phi_0(x)\in H^2(\mathbb{T}^n)$, $\phi_1(x)\in L^2(\mt^n)$ and $f(x,t)\in L^1_{\rm loc}(\mathbb{R}^+;L^2(\mt^n))$, then any solution of equation \eqref{linear1}-\eqref{linear2} is given by
\begin{align}\label{3.1}
	\phi(x,t)=L_1(t)\phi_0(x)+L_2(t)\left(\phi_1(x)-\frac{1}{2}\Delta\phi_0(x)\right)+\int_0^t L_2(t-\tau)f(x,\tau)\dd \tau.
\end{align}
\end{lemma}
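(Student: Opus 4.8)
The plan is to solve the ODE obtained by taking the Fourier transform of \eqref{linear1} in the space variable, and then verify that the resulting formula matches \eqref{3.1}. First I would apply the Fourier transform in $x$ to \eqref{linear1}--\eqref{linear2}: writing $\psi_k(t) := \w{\phi}(k,t)$, the equation becomes the second-order linear ODE with constant coefficients
\begin{align*}
	\psi_k''(t) = -|k|^2\psi_k'(t) - (\tb\cdot k)^2 \psi_k(t) + \w{f}(k,t),\qquad \psi_k(0)=\w{\phi_0}(k),\quad \psi_k'(0)=\w{\phi_1}(k),
\end{align*}
where I have used that $\mathcal{F}(\Delta g)(k)=-|k|^2\w g(k)$ and $\mathcal{F}((\tb\cdot\nabla)^2 g)(k)=-(\tb\cdot k)^2\w g(k)$. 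The characteristic equation is $\lambda^2+|k|^2\lambda+(\tb\cdot k)^2=0$, whose roots are $\lambda_\pm = -\tfrac{|k|^2}{2}\pm\sigma$ with $\sigma=\sqrt{\tfrac{|k|^4}{4}-(\tb\cdot k)^2}$ exactly as defined before the lemma. Note that $\lambda_+ + \lambda_- = -|k|^2$ and $\lambda_+ - \lambda_- = 2\sigma$; the Diophantine condition guarantees $\tb\cdot k\neq 0$ for $k\neq 0$, so the discriminant never produces a root at $0$, but I should remark that the case $\sigma=0$ (double root) is covered by the usual limiting argument since all the $\e^{\pm\sigma t}$ expressions extend analytically in $\sigma$ (and for $k=0$ the mean-zero hypotheses or the equation itself handle the trivial mode).

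Next I would solve this ODE by the standard method: the homogeneous solution spanned by $\e^{\lambda_+ t},\e^{\lambda_- t}$ is repackaged, using the fundamental solution normalized by $W(0)=1$, $W'(0)=0$ and $V(0)=0$, $V'(0)=1$, as
\begin{align*}
	W(t) = \tfrac12\left(\e^{\lambda_+ t}+\e^{\lambda_- t}\right),\qquad V(t)=\tfrac{1}{2\sigma}\left(\e^{\lambda_+ t}-\e^{\lambda_- t}\right),
\end{align*}
which are precisely the Fourier symbols of $L_1(t)$ and $L_2(t)$ in \eqref{3.0}--\eqref{3.01}. Since $W' = \lambda_+\lambda_- V = -(\tb\cdot k)^2 V$ one checks $W(t) = V'(t) + |k|^2 V(t)$, equivalently $W(t) = V'(t) - \tfrac12\Delta V$ on the Fourier side becomes the algebraic identity $W = V' + |k|^2 V$; then the solution of the homogeneous problem with data $(\psi_k(0),\psi_k'(0))$ is $\psi_k(0)W(t) + \bigl(\psi_k'(0) - (\text{correction})\bigr)V(t)$, and a direct computation of $\tfrac{d}{dt}\bigl[\psi_k(0)W + c\,V\bigr]\big|_{t=0}$ forces the correction to be $-\tfrac12|k|^2\psi_k(0)$, i.e. in physical space $\phi_1 - \tfrac12\Delta\phi_0$. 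The inhomogeneous part is handled by Duhamel/variation of parameters: since $V$ is the solution operator vanishing at $t=0$ with unit derivative, the particular solution is $\int_0^t V(t-\tau)\w f(k,\tau)\dd\tau$, whose inverse Fourier transform is $\int_0^t L_2(t-\tau)f(\cdot,\tau)\dd\tau$. Assembling these three pieces and inverting the Fourier transform yields exactly \eqref{3.1}.

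The only genuine subtleties — rather than a main obstacle — are bookkeeping: (a) justifying that the formal termwise solution of the Fourier-side ODEs really defines a function in the asserted regularity classes and that one may differentiate under the sum, which follows from $\phi_0\in H^2$, $\phi_1\in L^2$, $f\in L^1_{\mathrm{loc}}(\mathbb{R}^+;L^2)$ together with the crude bounds $|W(t)|, |\sigma V(t)| \lesssim \e^{-|k|^2 t/2}(1 + \text{something})$ when $\sigma$ is imaginary, and $|W(t)|,|V(t)|\le C(t)$ uniformly in $k$ when one tracks that $\mathrm{Re}\,\lambda_\pm \le 0$ — here I would note $\sigma$ is either real with $|\sigma|\le |k|^2/2$ or purely imaginary, so in all cases $|\e^{\lambda_\pm t}|\le 1$ for $t\ge 0$, giving $|W(t)|\le 1$ and, after the elementary estimate $|\e^{\lambda_+t}-\e^{\lambda_-t}|\le 2|\sigma| t\max|\e^{\lambda_\pm t}|$, also $|V(t)|\le t$; (b) the removable-singularity point $\sigma = 0$, dispatched by analyticity as mentioned; and (c) checking the initial conditions \eqref{linear2} hold in the appropriate sense, which is immediate from $W(0)=1$, $W'(0)=0$, $V(0)=0$, $V'(0)=1$ and $\int_0^t L_2(t-\tau)f\dd\tau \to 0$ with its $t$-derivative $\to 0$ as $t\to 0^+$. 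Since the ODE is linear with locally integrable forcing, uniqueness of the solution is automatic, so \eqref{3.1} is not merely \emph{a} solution but \emph{the} solution.
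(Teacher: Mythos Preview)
Your approach---Fourier transform in $x$, solve the resulting second-order constant-coefficient ODE via its characteristic roots $\lambda_\pm=-\tfrac{|k|^2}{2}\pm\sigma$ and Duhamel, then invert---is exactly the paper's method; the only cosmetic difference is that the paper factors the operator as $(\partial_t+\tfrac{|k|^2}{2}-\sigma)(\partial_t+\tfrac{|k|^2}{2}+\sigma)$ and solves two first-order problems in sequence rather than writing down the fundamental system directly. One slip worth fixing: your claim that $W'(0)=0$ is false---in fact $W'(0)=\tfrac12(\lambda_++\lambda_-)=-\tfrac{|k|^2}{2}$, and this nonzero value is \emph{exactly} the source of the $-\tfrac12\Delta\phi_0$ correction you later recover ``by direct computation''; since you do land on the right coefficient, the argument is sound, but the exposition is internally inconsistent as written.
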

\begin{proof}
Taking the Fourier transform on both sides of \eqref{linear1}, we have
   	\begin{align*}
   		\partial_{tt}\widehat{\phi}(k,t)=-\partial_t|k|^2 \widehat{\phi}(k,t)-(\tilde{b}\cdot k)^2\widehat{\phi}(k,t)+\widehat{f}(k,t),\quad k\in{\mathbb{Z}^n},
   	\end{align*}
which implies, after some basic calculations, that
   \begin{align*}
   	\left(\left(\partial_t+\frac{|k|^2}{2}\right)^2-\frac{|k|^4}{4}+(\tilde{b}\cdot k)^2\right)\widehat{\phi}(k,t)=\widehat{f}(k,t),
   \end{align*}
namely
\begin{align}\label{3.2}
	\left(\partial_t+\frac{|k|^2}{2}-\sigma\right)\left(\partial_t+\frac{|k|^2}{2}+\sigma\right)\widehat{\phi}(k,t)(k,t)=\widehat{f}(k,t).
\end{align}

To solve \eqref{3.2}, we assume
\begin{align}\label{3.3}
		\left(\partial_t+\frac{|k|^2}{2}+\sigma\right)\widehat{\phi}(k,t)=\Phi^+(k,t),~	\left(\partial_t+\frac{|k|^2}{2}-\sigma\right)\widehat{\phi}(k,t)=\Phi^-(k,t),
\end{align}
and therefore it satisfies
\begin{align*}
	\left(\partial_t+\frac{|k|^2}{2}-\sigma\right)\Phi^+(k,t)=\widehat{f}(k,t),~	\left(\partial_t+\frac{|k|^2}{2}+\sigma\right)\Phi^-(k,t)=\widehat{f}(k,t).
\end{align*}
By Duhamel’s principle, it follows that
\begin{align}\label{3.4}
	\Phi^+(k,t)=\e^{(-\frac{|k|^2}{2}+\sigma)t}\Phi^+(k,0)+\int_0^t\e^{(-\frac{|k|^2}{2}+\sigma)(t-\tau)}\widehat{f}(k,\tau) \dd \tau,\\
	\Phi^-(k,t)=\e^{(-\frac{|k|^2}{2}-\sigma)t}\Phi^-(k,0)+\int_0^t\e^{(-\frac{|k|^2}{2}-\sigma)(t-\tau)}\widehat{f}(k,\tau) \dd \tau,\label{3.5}
\end{align}
which implies, by taking $t=0$ in \eqref{3.3} and then substituting into \eqref{3.4} and \eqref{3.5}, that
\begin{align}
	&\w{\phi}(k,t)=\frac{\Phi^+(k,t)-\Phi^-(k,t)}{2\sigma}\notag\\
	&=\frac{1}{2\sigma}\left[\left(\e^{(-\frac{|k|^2}{2}+\sigma)t}-\e^{(-\frac{|k|^2}{2}-\sigma)t}\right) \w{\phi_1}(k)+\frac{|k|^2}{2}\w{\phi}_0(k)\left(\e^{(-\frac{|k|^2}{2}+\sigma)t}-\e^{(-\frac{|k|^2}{2}-\sigma)t}\right)\right]\label{phi1}\\ &~~~+\frac{1}{2}\w{\phi_0}(k)\left(\e^{(-\frac{|k|^2}{2}+\sigma)t}+\e^{(-\frac{|k|^2}{2}-\sigma)t}\right)+\frac{1}{2\sigma}\int_0^t\left(\e^{(-\frac{|k|^2}{2}+\sigma)(t-\tau)}-\e^{(-\frac{|k|^2}{2}-\sigma)(t-\tau)}\right)\w{f}(k,\tau)\dd \tau.\notag
\end{align}

Recalling the definitions of $L_1(t)$ and $L_2(t)$ in \eqref{3.0} and \eqref{3.01}, one can update \eqref{phi1} as
\begin{align}\label{phi2}
		\w{\phi}(k,t)=\w{L_1(t)\phi_0}(k)+\left(\w{L_2(t)\phi_1}(k)+\frac{|k|^2}{2}\w{L_2(t)\phi_0}(k)\right)+\int_0^t \w{L_2(t-\tau)f}(k,\tau)\dd \tau.
\end{align}
Finally, we can obtain \eqref{3.1} by taking the Fourier inverse transform on both sides of \eqref{phi2}.
\end{proof}

In order to simplify the presentation, we introduce the following two numbers that will be used frequently
\begin{align*}
	\lambda_1=(-\frac{|k|^2}{2}+\sigma)=-\frac{|k|^2}{2}+\sqrt{\frac{|k|^4}{4}-(\tilde{b}\cdot k)^2},\\
	\lambda_2=(-\frac{|k|^2}{2}-\sigma)=-\frac{|k|^2}{2}-\sqrt{\frac{|k|^4}{4}-(\tilde{b}\cdot k)^2},
\end{align*}
and then \eqref{3.0}-\eqref{3.01} can be simplified as
\begin{align}\label{3.8}
	\w{L_1(t)\phi}(k,t)=\frac{1}{2}\left(\e^{(-\frac{|k|^2}{2}+\sigma)t}+\e^{(-\frac{|k|^2}{2}-\sigma)t}\right)\w{\phi}(k,t)=\frac{1}{2}\left(\e^{\lambda_1t}+\e^{\lambda_2t}\right)\w{\phi}(k,t),\\
	\w{L_2(t)\phi}(k,t)=\frac{1}{2\sigma}\left(\e^{(-\frac{|k|^2}{2}+\sigma)t}-\e^{(-\frac{|k|^2}{2}-\sigma)t}\right)\w{\phi}(k,t)=\frac{1}{2\sigma}\left(\e^{\lambda_1t}-\e^{\lambda_2t}\right)\w{\phi}(k,t).\label{3.9}
\end{align}

\subsection{Spectral analysis}\label{3}

\,\,\,\,\,\,\,\,To establish the linear stability, large time behavior and nonlinear stability of $u$ and $b$, we need estimate the operators $L_1(t)$ and $L_2(t)$ precisely.
Because the behavior of them depends on the frequency $k$, it is natural to divide the frequency space $\mz^n\backslash\{0\}$ into following three subdomains according to the range of $k$, that is
	\begin{align}
	S_1&=\{k\in\mathbb{Z}^n\backslash\{0\}: 1-4\frac{(\tilde{b}\cdot k)^2}{|k|^4}\leq 0\},\label{s1}\\
	S_2&=\{k\in\mathbb{Z}^n\backslash\{0\}:  0\leq1-4\frac{(\tilde{b}\cdot k)^2}{|k|^4}\leq \frac{1}{4}\},\label{s2}\\
	S_3&=\{k\in\mathbb{Z}^n\backslash\{0\}: 1-4\frac{(\tilde{b}\cdot k)^2}{|k|^4}\geq\frac{1}{4}\},\label{s3}
\end{align}
where $S_1$, $S_2$ and $S_3$ represent {\it low frequency space}, {\it medium frequency space} and {\it high frequency space} respectively. Thanks to \eqref{s1}-\eqref{s3}, we can summarize the precise estimates of $L_1(t)$ and $L_2(t)$ in the following lemma.
\begin{lemma}\label{lem2}
 Let $f$ be a function defined on $\mathbb{T}^n$, then for the absolute constant $C$, $L_1(t)$ and $L_2(t)$ satisfy the following estimates:
\begin{itemize}
\item For $k\in S_1$
	\begin{align*}
		|\w{L_1f}(k,t)|\leq \e^{-\frac{|k|^2}{2}t}|\w{f}|, ~
		|\w{L_2f}(k,t)|\leq C \frac{1}{|k|^2}\e^{-\frac{|k|^2}{4}t}|\w{f}|.
	\end{align*}
\item For $k\in S_2$
\begin{align*}
	|\w{L_1f}(k,t)|\leq C \e^{-\frac{|k|^2}{4}t}|\w{f}|,~
	|\w{L_2f}(k,t)|\leq C\frac{1}{|k|^2}\e^{-\frac{|k|^2}{8}t}|\w{f}|.
\end{align*}
\item For $k\in S_3$
\begin{align*}
	|\w{L_1f}(k,t)|\leq \frac{1}{2}\left(\e^{-\frac{(\tilde{b}\cdot k)^2}{|k|^2} t}+\e^{-\frac{3|k|^2}{4}t}\right)|\w{f}|,~
	|\w{L_2f}(k,t)|\leq \frac{\e^{-\frac{(\tilde{b}\cdot k)^2}{|k|^2} t}+\e^{-\frac{3|k|^2}{4}t}}{|k|^2}|\w{f}|.
\end{align*}
\end{itemize}
\end{lemma}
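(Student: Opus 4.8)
The plan is to work entirely on the Fourier side. For each fixed nonzero $k$, the multipliers of $L_1(t)$ and $L_2(t)$ are combinations of $\e^{\lambda_1 t}$ and $\e^{\lambda_2 t}$ where $\lambda_{1,2}=-\frac{|k|^2}{2}\pm\sigma$ and $\sigma=\sqrt{\frac{|k|^4}{4}-(\tb\cdot k)^2}$. The whole issue is the sign and size of the radicand $\frac{|k|^4}{4}-(\tb\cdot k)^2$, equivalently of $1-4\frac{(\tb\cdot k)^2}{|k|^4}$, which is precisely what the partition $S_1,S_2,S_3$ tracks. So I would treat the three cases separately, and within each case estimate $\operatorname{Re}\lambda_1$, $\operatorname{Re}\lambda_2$, and the factor $\frac{1}{2\sigma}$ appearing in $L_2$.

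For $k\in S_1$ (so $\frac{|k|^4}{4}\le(\tb\cdot k)^2$): here $\sigma=\ii\,\omega$ with $\omega=\sqrt{(\tb\cdot k)^2-\frac{|k|^4}{4}}\ge 0$ real, so $\e^{\lambda_{1,2}t}$ has modulus $\e^{-|k|^2 t/2}$ exactly, and $\frac12(\e^{\lambda_1 t}+\e^{\lambda_2 t})=\e^{-|k|^2t/2}\cos(\omega t)$, giving the stated bound $|\widehat{L_1 f}|\le\e^{-|k|^2 t/2}|\widehat f|$ at once. For $L_2$ the kernel is $\frac{1}{2\ii\omega}(\e^{\lambda_1 t}-\e^{\lambda_2 t})=\e^{-|k|^2t/2}\frac{\sin(\omega t)}{\omega}$; I bound $|\sin(\omega t)/\omega|\le t$ for small $t$ but more usefully note that on $S_1$ one has $(\tb\cdot k)^2\le|k|^4$ (a crude bound suffices) hence... actually the clean route is $\omega\ge$ something only when $S_1$ is a strict interior; since that fails near the boundary, instead use $|\e^{\lambda_1 t}-\e^{\lambda_2 t}|\le 2\e^{-|k|^2 t/2}$ combined with $\frac{1}{2\omega}$ — this is problematic when $\omega\to 0$. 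The correct fix, which I would carry out, is to interpolate: write $\frac{|\e^{\lambda_1t}-\e^{\lambda_2t}|}{2|\sigma|}=\e^{-|k|^2t/2}\big|\tfrac{\sinh(\sigma t)}{\sigma}\big|$, valid for all $\sigma\in\mathbb C$, and use the entire-function bound $\big|\tfrac{\sinh z}{z}\big|\le \e^{|\operatorname{Re}z|}$ together with $|\operatorname{Re}(\sigma t)|\le \tfrac{|k|^2}{2}t$ and, separately, $\big|\tfrac{\sinh(\sigma t)}{\sigma}\big|\le t\,\e^{|k|^2 t/2}$; the latter handed against a gain of $\e^{-|k|^2 t/4}$ from splitting the exponent $\e^{-|k|^2t/2}=\e^{-|k|^2t/4}\e^{-|k|^2t/4}$ and $t\e^{-|k|^2t/4}\le \frac{C}{|k|^2}$ produces exactly $\frac{C}{|k|^2}\e^{-|k|^2t/4}|\widehat f|$. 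This $t\,\e^{-c|k|^2 t}\lesssim (c|k|^2)^{-1}$ trick is the workhorse and I would state it once as an elementary fact.

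For $k\in S_3$ (so $1-4\frac{(\tb\cdot k)^2}{|k|^4}\ge\frac14$, i.e. $\sigma$ is real and $\sigma\ge\frac{|k|^2}{4}$): now $\lambda_1,\lambda_2$ are real and negative. One has $\lambda_2=-\frac{|k|^2}{2}-\sigma\le -\frac{3|k|^2}{4}$, giving the $\e^{-3|k|^2t/4}$ term. For $\lambda_1=-\frac{|k|^2}{2}+\sigma$, rationalize: $\lambda_1=\frac{\lambda_1\lambda_2}{\lambda_2}=\frac{(\tb\cdot k)^2}{-\frac{|k|^2}{2}-\sigma}$, and since $\frac{|k|^2}{2}+\sigma\le |k|^2$ on $S_3$ (because $\sigma\le\frac{|k|^2}{2}$ always), we get $\lambda_1\le -\frac{(\tb\cdot k)^2}{|k|^2}$. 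Hence $|\widehat{L_1f}|\le\frac12(\e^{\lambda_1 t}+\e^{\lambda_2 t})|\widehat f|\le \frac12\big(\e^{-(\tb\cdot k)^2 t/|k|^2}+\e^{-3|k|^2 t/4}\big)|\widehat f|$. For $L_2$ the prefactor is $\frac{1}{2\sigma}\le\frac{2}{|k|^2}$, and $|\e^{\lambda_1 t}-\e^{\lambda_2 t}|\le\e^{\lambda_1 t}+\e^{\lambda_2 t}$ gives the claimed bound (absorbing the $2$ and the $\frac12$ into the displayed form, which already has no explicit small constant — a factor $2$ is harmless, or one checks $\frac{1}{2\sigma}\le\frac1{|k|^2}$ directly from $\sigma\ge\frac{|k|^2}{2}\cdot\frac{1}{\sqrt 2}$... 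I would simply verify the precise constant from $\sigma\ge\frac{|k|^2}{4}$). For $k\in S_2$ ($0\le 1-4\frac{(\tb\cdot k)^2}{|k|^4}\le\frac14$): $\sigma$ is real with $0\le\sigma\le\frac{|k|^2}{4}$, so $\lambda_1\le -\frac{|k|^2}{4}$ and $\lambda_2\le-\frac{|k|^2}{2}$, immediately giving $|\widehat{L_1 f}|\le C\e^{-|k|^2 t/4}|\widehat f|$. For $L_2$, the delicate point is again that $\sigma$ may vanish (lower boundary of $S_2$); I apply the same $\sinh$-interpolation as in $S_1$: $\frac{|\e^{\lambda_1 t}-\e^{\lambda_2 t}|}{2\sigma}=\e^{-|k|^2t/2}\frac{\sinh(\sigma t)}{\sigma}$ with $0\le\sigma\le\frac{|k|^2}{4}$, bound $\frac{\sinh(\sigma t)}{\sigma}\le t\,\e^{\sigma t}\le t\,\e^{|k|^2 t/4}$, so the kernel is $\le t\,\e^{-|k|^2 t/4}\le \frac{C}{|k|^2}\e^{-|k|^2 t/8}$, which is the claim.

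The main obstacle is uniformity at the interfaces between the regions, where $\sigma\to 0$ and the naive bound $\frac{1}{2\sigma}$ blows up; the resolution throughout is to never divide by $\sigma$ in isolation but to keep the combination $\frac{\sinh(\sigma t)}{\sigma}$ (an entire function of $\sigma$) intact and trade the resulting factor of $t$ against a sliver of the Gaussian decay via $t\,\e^{-c|k|^2 t}\le (c\,|k|^2)^{-1}$. Everything else is bookkeeping: verifying the elementary inequalities $\frac{|k|^2}{2}+\sigma\le|k|^2$, $\lambda_2\le-\frac{3|k|^2}{4}$ on $S_3$, $\sigma\le\frac{|k|^2}{4}$ on $S_2$, and $\sigma\ge\frac{|k|^2}{4}$ on $S_3$, each of which is a one-line consequence of the defining inequalities \eqref{s1}--\eqref{s3}.
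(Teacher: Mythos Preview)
Your proposal is correct and follows essentially the same route as the paper. The paper also treats the three regions separately, bounds $\operatorname{Re}\lambda_1$ and $\operatorname{Re}\lambda_2$ exactly as you do (including the rationalization $\lambda_1=-\frac{2(\tb\cdot k)^2/|k|^2}{1+\sqrt{1-4(\tb\cdot k)^2/|k|^4}}$ on $S_3$), and handles the $\frac{1}{2\sigma}$ singularity with the same trade-off $t\,\e^{-c|k|^2t}\le C|k|^{-2}$. The only cosmetic difference is packaging: where you use the entire-function bound for $\frac{\sinh(\sigma t)}{\sigma}$ uniformly across $S_1$ and $S_2$, the paper writes out Euler's formula and $|\sin x|\le |x|$ on $S_1$ and invokes the mean value theorem $\frac{\e^{\lambda_1 t}-\e^{\lambda_2 t}}{\lambda_1-\lambda_2}=t\,\e^{\xi t}$ with $\xi\in(\lambda_2,\lambda_1)$ on $S_2$; these are the same estimate in different clothing. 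Your observation about the stray factor of $2$ in the $S_3$ bound for $L_2$ is accurate---the paper's proof also yields $\frac{1}{2\sigma}\le\frac{2}{|k|^2}$ from $2\sigma\ge\frac{|k|^2}{2}$, so the displayed constant in the lemma is off by a harmless factor.
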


\begin{proof}
{\bf When $k\in S_1$}, by utilizing the Euler formula
\begin{align}\label{euler}
	\e^{\ii x}=\cos x+\ii \sin x,
\end{align}
and noticing that $\left|\cos x\right| \leq 1$ for any $x \in \mr$, we can obtain
\begin{align*}
	|\w{L_1f}(k,t)|=\left|\frac{\e^{\lambda_1t}+\e^{\lambda_2t}}{2}\w{f}\right|&=\left|\frac{\e^{(-\frac{|k|^2}{2}+\sigma) t}+\e^{(-\frac{|k|^2}{2}-\sigma)t}}{2}\right||\w{f}|\\
	&=\e^{-\frac{|k|^2}{2}t}\left|\frac{\e^{\ii \frac{|k|^2}{2}t\sqrt{4\frac{(\tilde{b}\cdot k)^2}{|k|^4}-1}}+\e^{-\ii \frac{|k|^2}{2}t\sqrt{4\frac{(\tilde{b}\cdot k)^2}{|k|^4}-1}}}{2}\right||\w{f}|\\
	&=\e^{-\frac{|k|^2}{2}t}\left|\cos \left(\frac{|k|^2}{2}t\sqrt{4\frac{(\tilde{b}\cdot k)^2}{|k|^4}-1}\right)\right||\w{f}|\\
	&\leq \e^{-\frac{|k|^2}{2}t}|\w{f}|.
\end{align*}
By employing \eqref{euler} again, the inequality $\left| \sin x\right|\leq |x|$ holds for any $x\in \mr$ and
\begin{equation}\label{ye}
  y^n\e^{-y}\leq C, \quad {\rm for\,\,any}\,\, y\in \mathbb{R}^+\,\,{\rm and}\,\, n\in \mathbb{N}^+,
\end{equation}
it follows that

\begin{align*}
	|\w{L_2f}(k,t)|=\left|\frac{\e^{\lambda_1t}-\e^{\lambda_2t}}{\lambda_1-\lambda_2}\w{f}\right|&=\left|\frac{\e^{(-\frac{|k|^2}{2}+\sigma) t}-\e^{(-\frac{|k|^2}{2}-\sigma)t}}{2\sigma}\right||\w{f}|\\
&=\e^{-\frac{|k|^2}{2}t}\left|\frac{\e^{\ii \frac{|k|^2}{2}t\sqrt{4\frac{(\tilde{b}\cdot k)^2}{|k|^4}-1}}-\e^{-\ii \frac{|k|^2}{2}t\sqrt{4\frac{(\tilde{b}\cdot k)^2}{|k|^4}-1}}}{2\ii \frac{|k|^2}{2}\sqrt{4\frac{(\tilde{b}\cdot k)^2}{|k|^4}-1}}\right||\w{f}|\\
&\leq t\e^{-\frac{|k|^2}{2}t}\left|\frac{2\ii\sin \left(\frac{|k|^2}{2}t\sqrt{4\frac{(\tilde{b}\cdot k)^2}{|k|^4}-1}\right)}{2\ii \frac{|k|^2}{2}t\sqrt{4\frac{(\tilde{b}\cdot k)^2}{|k|^4}-1}}\right||\w{f}|\\
&\leq C \frac{1}{|k|^2}\e^{-\frac{|k|^2}{4}t}|\w{f}|.
\end{align*}

{\bf When $k\in S_2$}, $1-4\frac{(\tilde{b}\cdot k)^2}{|k|^4}$ has upper and lower bounds, and so are $\lambda_1$ and $\lambda_2$ because
\begin{align*}
	-\frac{|k|^2}{2}\leq\lambda_1=-\frac{|k|^2}{2}\left(1-\sqrt{1-4\frac{(\tilde{b}\cdot k)^2}{|k|^4}}\right)\leq -\frac{|k|^2}{4},\\
	-\frac{3|k|^2}{4}\leq\lambda_2=-\frac{|k|^2}{2}\left(1+\sqrt{1-4\frac{(\tilde{b}\cdot k)^2}{|k|^4}}\right)\leq-\frac{|k|^2}{2}.
\end{align*}
Therefore, it follows that
\begin{align}
	|\w{L_1f}(k,t)|=\left|\frac{\e^{\lambda_1t}+\e^{\lambda_2t}}{2}\w{f}\right|\leq  \frac{1}{2}\left(\e^{-\frac{|k|^2}{2}t}+\e^{-\frac{|k|^2}{4}t}\right)|\w{f}|\leq C \e^{-\frac{|k|^2}{4}t}|\w{f}|,\label{l11}
\end{align}
and
\begin{align}
	|\w{L_2f}(k,t)|=\left|\frac{\e^{\lambda_1t}-\e^{\lambda_2t}}{\lambda_1-\lambda_2}\w{f}\right|=t\e^{\xi t}|\w{f}| \leq C\frac{1}{|k|^2}\e^{-\frac{|k|^2}{8}t}|\w{f}|,\label{l21}
\end{align}
where we used the mean value theorem and \eqref{ye} in \eqref{l21} and $\xi\in (\lambda_2,\lambda_1)$.

{\bf When $k\in S_3$}, $\lambda_1$ may approaches zero and corresponding estimates will not depend on $|k|^2$. To solve this problem, we rewrite
$\lambda_1$ as follows
\begin{align*}
		\lambda_1=-\frac{|k|^2}{2}\left(1-\sqrt{1-4\frac{(\tilde{b}\cdot k)^2}{|k|^4}}\right)=\dfrac{-2\frac{(\tilde{b}\cdot k)^2}{|k|^2}}{1+\sqrt{1-4\frac{(\tilde{b}\cdot k)^2}{|k|^4}}}.
	\end{align*}
Based on the fact that $S_3$ satisfies $\frac{1}{4}\leq 1-4\frac{(\tilde{b}\cdot k)^2}{|k|^4}\leq 1$, we have
\begin{align*}
 &-\frac{4(\tilde{b}\cdot k)^2}{3|k|^2} \leq	\lambda_1\leq -\frac{(\tilde{b}\cdot k)^2}{|k|^2} ,\\
-|k|^2\leq \lambda_2&=-\frac{|k|^2}{2}\left(1+\sqrt{1-4\frac{(\tilde{b}\cdot k)^2}{|k|^4}}\right)\leq-\frac{3|k|^2}{4},
\end{align*}
and
\begin{align*}
	2\sigma=\lambda_1-\lambda_2=2\sqrt{\frac{|k|^4}{4}-(\tilde{b}\cdot k)^2}=|k|^2\sqrt{1-4\frac{(\tilde{b}\cdot k)^2}{|k|^4}}\geq\frac{|k|^2}{2}.
\end{align*}
One has
\begin{align*}
	|\w{L_1f}(k,t)|=\left|\frac{\e^{\lambda_1t}+\e^{\lambda_2t}}{2}\w{f}\right|\leq  \frac{1}{2}\left(\e^{-\frac{(\tilde{b}\cdot k)^2}{|k|^2} t}+\e^{-\frac{3|k|^2}{4}t}\right)|\w{f}|,
\end{align*}
and
\begin{align*}
	|\w{L_2f}(k,t)|=\left|\frac{\e^{\lambda_1t}-\e^{\lambda_2t}}{2\sigma}\w{f}\right|\leq\frac{\e^{-\frac{(\tilde{b}\cdot k)^2}{|k|^2} t}+\e^{-\frac{3|k|^2}{4}t}}{|k|^2}|\w{f}|.
\end{align*}
We have completed the proof of this Lemma.
\end{proof}

\subsection{Proofs for the linear stability results}

\,\,\,\,\,\,\,\,In what follows, we get to establish the linear stability results by Lemma \ref{lem1} and \ref{lem2}.
As a preparation, we first give the integral representation of $U$ and $B$. For the convenience of reader,
we recall the linearized system \eqref{linear},
\begin{align}\label{linearrepeat}
	\left\{\begin{array}{l}
		\partial_{tt}U=\partial_t\Delta U+(\tilde{b}\cdot\nabla)^2U,\\
		\partial_{tt}B=\partial_t\Delta B+(\tilde{b}\cdot\nabla)^2B,\\
        \nabla\cdot U=\nabla \cdot B=0, \\
		U(x, 0)=U_0(x), B(x, 0)=B_0(x),
	\end{array}\right.
\end{align}
and the linearized form of \eqref{equation},
\begin{align}\label{linear21}
	\left\{\begin{array}{l}
		\partial_t U-\mu\Delta U=\tilde{b} \cdot \nabla B, \\
		\partial_t B-\nu\Delta B=\tilde{b} \cdot \nabla U.
	\end{array}\right.
\end{align}

For the case  $\mu=0$, $\nu=1$, according to Lemma \ref{lem1} and \eqref{linear21}, the integral representations of $U$ and $B$ are given by
\begin{align}
		U(x,t)&=L_1(t)U_0(x)+L_2(t)\left(\tilde{b} \cdot \nabla B_0(x)-\frac{1}{2}\Delta U_0(x)\right),\label{new1}\\
		B(x,t)&=L_1(t)B_0(x)+L_2(t)\left(\tilde{b} \cdot \nabla U_0(x)+\frac{1}{2}\Delta B_0(x)\right).\label{new2}
\end{align}
For the case $\mu=1$, $\nu=0$, similarly, we have
\begin{align}
		U(x,t)&=L_1(t)U_0(x)+L_2(t)\left(\tilde{b} \cdot \nabla B_0(x)+\frac{1}{2}\Delta U_0(x)\right),\label{new3}\\
		B(x,t)&=L_1(t)B_0(x)+L_2(t)\left(\tilde{b} \cdot \nabla U_0(x)-\frac{1}{2}\Delta B_0(x)\right).\label{new4}
\end{align}

Now, we turn to the proof of Theorem \ref{thm1}.

\begin{proof}[Proof of Theorem \ref{thm1}]
It is clear that the proofs for case $\mu=0$, $\nu=1$ and case $\mu=1$, $\nu=0$ are same except for the positive or negative of $\Delta U_0$ and $\Delta B_0$,
to avoid duplication, we only provide the first one. Due to \eqref{mean-zero}, for any $t\geq 0$, there holds $\int_{\mathbb{T}^n}U(x,t)\dd x=\int_{\mathbb{T}^n}B(x,t)\dd x=0$ and therefore
\begin{equation}\label{mean-zero2}
 \w{U}(0,t)=0,\quad  \w{B}(0,t)=0.
\end{equation}
According to \eqref{new1}, \eqref{mean-zero2} and Plancherel’s theorem, it follows that
\begin{align}\nonumber
	\|U(x,t)\|^2_{L^2}&=\sum_{k\neq 0}|\w{U}(k,t)|^2\\\nonumber
	&	\leq 3\sum_{k\neq 0}|\w{L_1(t)U}_0(k)|^2+\frac32\sum_{k\neq 0}\left|\w{L_2(t)\Delta U}_0(k)\right|^2+3\sum_{k\neq 0}\left|\w{L_2(t)\tilde{b} \cdot \nabla B}_0(x)\right|^2\\
	&\triangleq I_1+I_2+I_3.\label{Usplit}
\end{align}
Utilizing Lemma \ref{lem2} and the Diophantine condition \eqref{Diophantine}, we have
\begin{align}
	I_1=3\sum_{k\neq 0}|\w{L_1(t)U}_0(k)|^2&\leq C\sum_{k\in S_1\cup S_2}\e^{-\frac{|k|^2}{2}t}|\w{U}_0(k)|^2+C\sum_{k\in S_3}\left(\e^{-\frac{2(\tilde{b}\cdot k)^2}{|k|^2} t}+\e^{-\frac{3|k|^2}{4}t}\right)|\w{U}_0(k)|^2\notag\\
	&\leq C \sum_{k\neq 0}\e^{-\frac{|k|^2}{2}t}|\w{U}_0(k)|^2+C\sum_{k\in S_3}\e^{-\frac{2(\tilde{b}\cdot k)^2}{|k|^2} t}|\w{U}_0(k)|^2\label{I11}\\
	&\leq C\e^{-\frac{t}{2}}\|U_0(x)\|_{L^2}^2+C\sum_{k\in S_3}\e^{-\frac{2c^2}{|k|^{2+2r}} t}|\w{U}_0(k)|^2.\notag
\end{align}
With the help of \eqref{ye}, the second term can be estimated as
\begin{align}
	&\sum_{k\in S_3}\e^{-\frac{2c^2}{|k|^{2+2r}} t}|\w{U}_0(k)|^2\notag\\
	=&\sum_{k\in S_3}\e^{-\frac{2c^2}{|k|^{2+2r}} t}\left(\frac{t}{|k|^{2+2r}}\right)^{\frac{2s}{2+2r}}t^{-\frac{2s}{2+2r}}|k|^{2s}|\w{U}_0(k)|^2\label{I12}\\
	\leq&  t^{-\frac{2s}{2+2r}} \|U_0(x)\|_{\dot{H}^{s}}^2\sup_{k\in S_3}\e^{-\frac{2c^2}{|k|^{2+2r}} t}\left(\frac{t}{|k|^{2+2r}}\right)^{\frac{2s}{2+2r}}\notag\\
	\leq& Ct^{-\frac{2s}{2+2r}} \|U_0(x)\|_{\dot{H}^{s}}^2,\notag
\end{align}	
or
\begin{align}
	\sum_{k\in S_3}\e^{-\frac{2c^2}{|k|^{2+2r}} t}|\w{U}_0(k)|^2\leq C\|U_0(x)\|_{L^2}^2.\label{I13}
\end{align}
Then by combining \eqref{I11}-\eqref{I13}, it yields that
\begin{align}\label{I_1}
	I_1\leq C\e^{-\frac{t}{2}}\|U_0(x)\|_{L^2}^2+C (1+t)^{-\frac{2s}{2+2r}}\|U_0(x)\|_{H^s}^2.
\end{align}

Employing Lemma \ref{lem2} once more, we can derive
\begin{align*}
	I_2&=\frac32\sum_{k\neq 0}\left|\w{L_2(t)\Delta U}_0(k)\right|^2\\
	&\leq C\sum_{k\in S_1\cup S_2}\frac{1}{|k|^4}\e^{-\frac{|k|^2}{4}t}|k|^4|\w{U}_0(k)|^2+C\sum_{k\in S_3}\left(\frac{1}{|k|^4}\e^{-\frac{2(\tilde{b}\cdot k)^2}{|k|^2} t}+\frac{1}{|k|^4}\e^{-\frac{3|k|^2}{2}t}\right)|k|^4|\w{U}_0(k)|^2\\
	&\leq C\sum_{k\neq0}\e^{-\frac{|k|^2}{4}t}|\w{U}_0(k)|^2+C\sum_{k\in S_3}\e^{-\frac{2(\tilde{b}\cdot k)^2}{|k|^2} t}|\w{U}_0(k)|^2,
\end{align*}
which implies, after using similar method as estimating $I_1$, that
\begin{align}\label{I_2}
	I_2\leq C\e^{-\frac{t}{4}}\|U_0(x)\|_{L^2}^2+C(1+t)^{-\frac{2s}{2+2r}}\|U_0(x)\|_{H^s}^2.
\end{align}

Noticing that the vector $\tb$ is given, it holds that
\begin{align*}
	I_3&=3\sum_{k\neq 0}\left|\w{L_2(t)\tilde{b} \cdot \nabla B}_0(x)\right|^2\\
	&\leq C\sum_{k\in S_1\cup S_2}\frac{1}{|k|^4}\e^{-\frac{|k|^2}{2}t}|\tilde{b}\cdot k\w{B}_0(k)|^2+C\sum_{k\in S_3}\left(\frac{1}{|k|^4}\e^{-\frac{2(\tilde{b}\cdot k)^2}{|k|^2} t}+\frac{1}{|k|^4}\e^{-\frac{3|k|^2}{2}t}\right)|\tilde{b}\cdot k\w{B}_0(k)|^2\\
	&\leq C\sum_{k\neq 0}\e^{-\frac{|k|^2}{2}t}\frac{1}{|k|^2}|\w{B}_0(k)|^2+C\sum_{k\in S_3}\e^{-\frac{2(\tilde{b}\cdot k)^2}{|k|^2} t}\frac{1}{|k|^2}|\w{B}_0(k)|^2,
\end{align*}
which yields, after utilizing similar approach as estimating $I_1$, that
\begin{align}\label{I_3}
	I_3\leq C\e^{-\frac{t}{2}}\|B_0(x)\|_{L^2}^2+C(1+t)^{-\frac{2s+2}{2+2r}}\|B_0(x)\|_{H^s}^2.
\end{align}

Next, by substituting \eqref{I_1}-\eqref{I_3} into \eqref{Usplit}, one can get
\begin{align}
	\|U\|_{L^2}\leq &C\e^{-\frac{t}{4}}\left(\|U_0\|_{L^2}+\|B_0\|_{L^2}\right)+C (1+t)^{-\frac{s}{2+2r}}\|U_0\|_{H^s}+C(1+t)^{-\frac{s+1}{2+2r}}\|B_0\|_{H^s}.\label{ul2}
  \end{align}
Because $\tb$ is given, from the linearized system \eqref{linearrepeat}, it is clear that the equations of $U(x,t)$ and $B(x,t)$ have similar structure and thus we can analogously conclude
\begin{align}
	\|B\|_{L^2}\leq& C\e^{-\frac{t}{4}}\left(\|U_0\|_{L^2}+\|B_0\|_{L^2}\right)+C (1+t)^{-\frac{s}{2+2r}}\|B_0\|_{H^s}+C(1+t)^{-\frac{s+1}{2+2r}}\|U_0\|_{H^s}.\label{bl2}
  \end{align}
Summing up \eqref{ul2} and \eqref{bl2} imply \eqref{lineark} with $\alpha=0$.

On account of the linearity of \eqref{linearrepeat}, the system satisfied by $\Lambda^\alpha U(x,t)$ and $\Lambda^\alpha B(x,t)$ is same as $U(x,t)$
and $B(x,t)$. Therefore, by repeating above process, for any $0\leq \alpha\leq s$, there holds
\begin{align}\label{linearalpha}
	\|\Lambda^\alpha U(x,t)\|_{L^2}+\|\Lambda^\alpha B(x,t)\|_{L^2}\leq  C (1+t)^{-\frac{s-\alpha}{2+2r}}\left(\|U_0(x)\|_{H^s}+\|B_0(x)\|_{H^s}\right),
\end{align}
which yields \eqref{lineark} after adding \eqref{ul2} and \eqref{bl2}. This completes the proof.
\end{proof}

\section{Nonlinear stability}\label{4}
\subsection{A priori estimates}\label{4/1}

\,\,\,\,\,\,\,\,In order to unearth the effective decay estimates, for the case $\mu=0$, $\nu=1$, we construct a suitable Lyapunov functional,
\begin{align}
	F(t)\triangleq A\left(\|\Lambda^{\frac{s}{2}+1} u(t)\|_{L^2}^2+\|\Lambda^{\frac{s}{2}+1} b(t)\|_{L^2}^2\right)-\int_{\mathbb{T}^n}\left(\tb\cdot\nabla u(t)\right)\cdot \Lambda^s b(t)\dd x,\label{lya}
\end{align}
where $A$ is a suitably selected content. The attention will be focused on the evolution of $F(t)$. Following is about the first term on the right hand of \eqref{lya}.

\begin{lemma}\label{hign energy}
	For any $\lambda\in[0,m]$ and $t\in[0,T]$, it holds that
	\begin{align}
	&\frac{1}{2}	\frac{\dd}{\dd t}\left(\|\Lambda^\lambda  u\|_{L^2}^2+\|\Lambda^\lambda  b\|_{L^2}^2\right)+\mu\|\Lambda^{\lambda+1} u\|_{L^2}^2+\nu\|\Lambda^{\lambda+1} b\|_{L^2}^2\notag\\
\leq& C \left(\|\nabla u\|_{L^\infty}+\|\nabla b\|_{L^\infty}\right)\left(\|\Lambda^\lambda  u\|_{L^2}^2+\|\Lambda^\lambda  b\|_{L^2}^2\right).\label{energy}
\end{align}
\end{lemma}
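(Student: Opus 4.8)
The plan is to apply the operator $\Lambda^\lambda$ to the first two equations of the perturbation system \eqref{equation}, take the $L^2$ inner product with $\Lambda^\lambda u$ and $\Lambda^\lambda b$ respectively, and add the two identities. The linear terms $\tilde b\cdot\nabla b$ and $\tilde b\cdot\nabla u$ cancel against each other after integration by parts (since $\tilde b$ is constant and $\nabla\cdot u=\nabla\cdot b=0$), and the diffusion terms $-\mu\Delta u$, $-\nu\Delta b$ produce exactly $\mu\|\Lambda^{\lambda+1}u\|_{L^2}^2+\nu\|\Lambda^{\lambda+1}b\|_{L^2}^2$ on the left. The pressure term drops because $\Lambda^\lambda u$ is divergence free. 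What remains on the right-hand side is a sum of four nonlinear contributions coming from $u\cdot\nabla u$, $b\cdot\nabla b$, $u\cdot\nabla b$, $b\cdot\nabla u$, and the task is to bound each of them by the right side of \eqref{energy}.

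The key step is estimating these quartic terms. For a typical term such as $\int_{\mathbb{T}^n}\Lambda^\lambda(u\cdot\nabla u)\cdot\Lambda^\lambda u\,\dd x$, I would first write it as $\int_{\mathbb{T}^n}\big(\Lambda^\lambda(u\cdot\nabla u)-u\cdot\nabla\Lambda^\lambda u\big)\cdot\Lambda^\lambda u\,\dd x$, because the commutator-free piece $\int u\cdot\nabla\Lambda^\lambda u\cdot\Lambda^\lambda u\,\dd x$ vanishes by integration by parts and $\nabla\cdot u=0$. Then I apply the Kato–Ponce commutator estimate (Lemma \ref{jiaohuanzi}, first bullet, with $p=2$, $p_1=q_1=p_2=q_2$ chosen so that one factor sits in $L^\infty$ via $\|\nabla u\|_{L^\infty}$) to obtain a bound of the form $C\|\nabla u\|_{L^\infty}\|\Lambda^\lambda u\|_{L^2}^2$. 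The mixed terms involving both $u$ and $b$, e.g. $\int\Lambda^\lambda(b\cdot\nabla u)\cdot\Lambda^\lambda b$ combined with $\int\Lambda^\lambda(b\cdot\nabla b)\cdot\Lambda^\lambda u$, require pairing them together: after commuting $\Lambda^\lambda$ past $b\cdot\nabla$ in each, the two non-commutator pieces $\int b\cdot\nabla\Lambda^\lambda u\cdot\Lambda^\lambda b+\int b\cdot\nabla\Lambda^\lambda b\cdot\Lambda^\lambda u$ cancel by integration by parts using $\nabla\cdot b=0$, leaving only commutators that are again controlled by $(\|\nabla u\|_{L^\infty}+\|\nabla b\|_{L^\infty})(\|\Lambda^\lambda u\|_{L^2}^2+\|\Lambda^\lambda b\|_{L^2}^2)$. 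Summing all contributions and using Young's inequality where needed gives \eqref{energy}.

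The main obstacle is the bookkeeping in the commutator estimates when $\lambda$ is small, in particular handling the endpoint $\lambda=0$ (where $\Lambda^{\lambda-1}$ in the Kato–Ponce bound is not directly meaningful) and low values $0<\lambda<1$. For $\lambda=0$ the estimate \eqref{energy} is just the basic energy identity, where the nonlinear terms vanish outright after integration by parts, so that case is separate and trivial. For $0<\lambda\le 1$ one can instead estimate the nonlinear terms by Hölder and the Gagliardo–Nirenberg/interpolation inequality (Lemma \ref{chazhi1}) rather than Kato–Ponce, absorbing a factor of $\|\Lambda^{\lambda+1}u\|_{L^2}$ into the dissipation when $\mu$ or $\nu$ is positive — but since \eqref{energy} is stated with the dissipation kept on the left and no smallness used, the cleaner route is to treat the product directly via Lemma \ref{jiaohuanzi}'s second bullet applied to $\nabla u$, $\nabla b$ at order $\lambda-1\geq 0$ reorganized appropriately; I would be careful to note that all implied constants are absolute and independent of $t$ and $T$. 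Finally, adding up the handful of terms and relabeling constants yields the claimed inequality.
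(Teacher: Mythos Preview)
Your proposal is correct and follows essentially the same route as the paper: apply $\Lambda^\lambda$, take inner products, cancel the linear $\tilde b\cdot\nabla$ terms and the pressure, then handle the four nonlinear integrals by the commutator trick plus Lemma~\ref{jiaohuanzi}, pairing the two $b\cdot\nabla(\cdot)$ terms so that their transport pieces cancel. The only superfluous worry is your discussion of small $\lambda$: since $\|\Lambda^{\lambda-1}\nabla u\|_{L^2}\simeq\|\Lambda^\lambda u\|_{L^2}$ for every $\lambda>0$ (Riesz transforms), the Kato--Ponce commutator bound applies uniformly on $(0,m]$ with no special treatment needed, and $\lambda=0$ is, as you note, just the basic energy identity.
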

\begin{proof}
Applyling the operator $\Lambda^\lambda$ on both sides of \eqref{equation}$_1$ and \eqref{equation}$_2$, taking the inner product of resultants with $\Lambda^\lambda u$ and $\Lambda^\lambda b$ respectively and adding them up, we have
\begin{align*}
\frac{1}{2}	\frac{\dd}{\dd t}\left(\|\Lambda^\lambda  u\|_{L^2}^2+\|\Lambda^\lambda  b\|_{L^2}^2\right)+\mu\|\Lambda^{\lambda+1} u\|_{L^2}^2+\nu\|\Lambda^{\lambda+1} b\|_{L^2}^2=\sum_{j=1}^{6}I_j,
\end{align*}
where
\begin{align*}
	I_1=&- \int_{\mathbb{T}^n} \Lambda^\lambda  (u \cdot \nabla u) \cdot \Lambda^\lambda u \mathrm{~d} x,
	~I_2= \int_{\mathbb{T}^n}\Lambda^\lambda (b \cdot \nabla b) \cdot \Lambda^\lambda  u \mathrm{~d} x, \\
	I_3=& - \int_{\mathbb{T}^n}\Lambda^\lambda  (u \cdot \nabla b) \cdot \Lambda^\lambda  b\mathrm{~d} x,
	~I_4= \int_{\mathbb{T}^n} \Lambda^\lambda  (b \cdot \nabla u) \cdot \Lambda^\lambda  b\mathrm{~d} x,\\
	I_5=& \int_{\mathbb{T}^n} (\tilde{b}\cdot\nabla \Lambda^\lambda  b) \cdot \Lambda^\lambda  u \mathrm{~d} x, ~I_6= \int_{\mathbb{T}^n} (\tilde{b}\cdot\nabla \Lambda^\lambda  u)\cdot \Lambda^\lambda  b\mathrm{~d} x.
\end{align*}
By some basic calculations, it clearly holds
\begin{align*}
	I_5+I_6=\int_{\mathbb{T}^n} (\tilde{b}\cdot\nabla \Lambda^\lambda  b) \cdot \Lambda^\lambda  u \mathrm{~d} x+ \int_{\mathbb{T}^n} (\tilde{b}\cdot\nabla \Lambda^\lambda  u)\cdot \Lambda^\lambda  b\mathrm{~d} x=0.
\end{align*}

Making use of Lemma \ref{jiaohuanzi} and \eqref{equation}$_3$, it follows that
\begin{align*}
	I_1=- \int_{\mathbb{T}^n} \Lambda^\lambda  (u \cdot \nabla u) \cdot \Lambda^\lambda u \mathrm{~d} x&=- \int_{\mathbb{T}^n} [\Lambda^\lambda  (u \cdot \nabla u) -u \cdot  \nabla \Lambda^\lambda  u]\cdot \Lambda^\lambda u \mathrm{~d} x\\
  &\leq \|\Lambda^\lambda  u\|^2_{L^2}\|\nabla u\|_{L^\infty},\\
  I_3= -\int_{\mathbb{T}^n}\Lambda^\lambda (u \cdot \nabla b) \cdot \Lambda^\lambda  b \mathrm{~d} x&=- \int_{\mathbb{T}^n} [\Lambda^\lambda  (u \cdot \nabla b) -u \cdot  \nabla \Lambda^\lambda  b]\cdot \Lambda^\lambda b \mathrm{~d} x\\
  &\leq \|\Lambda^\lambda  b\|_{L^2}\left(\|\Lambda^\lambda  u\|_{L^2}\|\nabla b\|_{L^\infty}+\|\Lambda^\lambda  b\|_{L^2}\|\nabla u\|_{L^\infty}\right),
\end{align*}
and
\begin{align*}
	I_2+I_4&=- \int_{\mathbb{T}^n}\Lambda^\lambda (b \cdot \nabla b) \cdot \Lambda^\lambda  u \mathrm{~d} x+ \int_{\mathbb{T}^n} \Lambda^\lambda  (b \cdot \nabla u) \cdot \Lambda^\lambda  b\mathrm{~d} x\\
	&=- \int_{\mathbb{T}^n}[\Lambda^\lambda (b \cdot \nabla b)-b \cdot \Lambda^\lambda \nabla  b] \cdot \Lambda^\lambda  u \mathrm{~d} x+\int_{\mathbb{T}^n} [\Lambda^\lambda  (b \cdot \nabla u) -b \cdot \Lambda^\lambda  \nabla u]\cdot \Lambda^\lambda  b\mathrm{~d} x\\
	&\leq \|\Lambda^\lambda  u\|_{L^2}\|\Lambda^\lambda  b\|_{L^2}\|\nabla u\|_{L^\infty}+\|\Lambda^\lambda  b\|_{L^2}\left(\|\Lambda^\lambda  u\|_{L^2}\|\nabla b\|_{L^\infty}+\|\Lambda^\lambda  b\|_{L^2}\|\nabla u\|_{L^\infty}\right).
\end{align*}
Collecting all estimates above leads to \eqref{energy}, which completes the proof.
\end{proof}

Next work is devoted to the second term on the right hand of \eqref{lya}, which will bring out the dissipation of $u$ and $b$.

\begin{lemma}\label{lem3}
	For any $s\in\mr^+$, the following estimate holds
\begin{align}
	-\frac{\dd}{\dd t}\int_{\mathbb{T}^n}\left(\tb\cdot\nabla u\right)\cdot \Lambda^s b\dd x&\leq \frac{3}{2}\left\|\Lambda^{\frac{s}{2}+2}b\right\|_{L^2}-\frac{1}{2}\left\|\Lambda^{\frac{s}{2}}\left(\tb\cdot\nabla u\right)\right\|_{L^2}\label{key1}\\
	&~~~~+\left(\|\nabla b\|_{L^\infty}+\|\nabla u\|_{L^\infty}\right) \left(\|\Lambda^{\frac{s}{2}+1} b\|_{L^2}^2+\|\Lambda^{\frac{s}{2}+1} u\|_{L^2}^2\right).\notag
\end{align}
\end{lemma}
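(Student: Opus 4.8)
The plan is to differentiate the quantity $\int_{\mathbb{T}^n}(\tb\cdot\nabla u)\cdot\Lambda^s b\,\dd x$ in time and substitute the evolution equations \eqref{equation}$_1$ and \eqref{equation}$_2$ (with $\mu=0$, $\nu=1$). Writing out the product rule gives two groups of terms: one where $\partial_t u$ is replaced using the $u$-equation, and one where $\partial_t b$ (hidden inside $\Lambda^s b$) is replaced using the $b$-equation. Concretely, $-\frac{\dd}{\dd t}\int(\tb\cdot\nabla u)\cdot\Lambda^s b = -\int(\tb\cdot\nabla\partial_t u)\cdot\Lambda^s b - \int(\tb\cdot\nabla u)\cdot\Lambda^s\partial_t b$. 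Into the first integral I substitute $\partial_t u = -\nabla p + \tb\cdot\nabla b + b\cdot\nabla b - u\cdot\nabla u$; into the second, $\partial_t b = \Delta b + \tb\cdot\nabla u + b\cdot\nabla u - u\cdot\nabla b$.

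The key structural observation is that the \emph{leading linear} contribution is the "good" term: from the first integral the piece $-\int(\tb\cdot\nabla(\tb\cdot\nabla b))\cdot\Lambda^s b\,\dd x$ integrates by parts (moving one $\tb\cdot\nabla$) to produce $+\int|\Lambda^{s/2}(\tb\cdot\nabla b)|^2$-type control after pairing with the $\tb\cdot\nabla u$ term from the second integral — but more to the point, the dominant term is $-\int(\tb\cdot\nabla u)\cdot\Lambda^s(\tb\cdot\nabla u)\,\dd x = -\|\Lambda^{s/2}(\tb\cdot\nabla u)\|_{L^2}^2$, which is the negative definite dissipative term we want. The dissipation term $\int(\tb\cdot\nabla u)\cdot\Lambda^s\Delta b\,\dd x$ is the one that must be absorbed: integrating by parts to balance derivatives it becomes something like $\int\Lambda^{s/2+1}(\tb\cdot\nabla u)\cdot\Lambda^{s/2+1}b$ — wait, more carefully, one pairs it as $\pm\int\Lambda^{s/2}(\tb\cdot\nabla u)\cdot\Lambda^{s/2+2}b$ and then applies Young's inequality $ab\le\frac14 a^2 + b^2$ (or with the constants chosen to match \eqref{key1}), splitting off $\frac12\|\Lambda^{s/2}(\tb\cdot\nabla u)\|_{L^2}^2$ to be eaten by the good term and leaving $\frac32\|\Lambda^{s/2+2}b\|_{L^2}^2$ — note the statement writes this without the square, presumably a typo. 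The pressure term $-\int(\tb\cdot\nabla\nabla p)\cdot\Lambda^s b\,\dd x$ vanishes after integration by parts using $\nabla\cdot b=0$ (so $\nabla\cdot\Lambda^s b=0$).

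The remaining terms are all nonlinear: $-\int(\tb\cdot\nabla(b\cdot\nabla b - u\cdot\nabla u))\cdot\Lambda^s b$ and $-\int(\tb\cdot\nabla u)\cdot\Lambda^s(b\cdot\nabla u - u\cdot\nabla b)$. These I would bound by moving derivatives around (integration by parts to distribute the $\Lambda^s$ and the $\tb\cdot\nabla$ so that at most $s/2+1$ derivatives land on any factor), then applying the commutator/product estimates of Lemma \ref{jiaohuanzi} together with Hölder, putting the lowest-order factor in $L^\infty$ (which is where $\|\nabla u\|_{L^\infty}$ and $\|\nabla b\|_{L^\infty}$ enter) and the two highest-order factors in $L^2$. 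Since $\tb$ is a fixed constant vector, each $\tb\cdot\nabla$ costs exactly one derivative and contributes a harmless factor $|\tb|$ absorbed into $C$. The bookkeeping should produce exactly the factor $(\|\nabla b\|_{L^\infty}+\|\nabla u\|_{L^\infty})(\|\Lambda^{s/2+1}b\|_{L^2}^2+\|\Lambda^{s/2+1}u\|_{L^2}^2)$ claimed in \eqref{key1}.

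The main obstacle I anticipate is the careful integration-by-parts bookkeeping needed to show that every nonlinear term can indeed be arranged so that no factor carries more than $s/2+1$ derivatives, and that the derivative count after distributing $\Lambda^s$ via the commutator estimate never exceeds what $\|\Lambda^{s/2+1}u\|_{L^2}$ and $\|\Lambda^{s/2+1}b\|_{L^2}$ can control — in particular that the "extra" derivatives beyond $s/2$ always get paired against the $\tb\cdot\nabla$ (one derivative) plus one more from the nonlinearity's own gradient, landing exactly at $s/2+1$. The linear dissipation term requires only a clean Young's inequality with the right split of constants, so that part is routine once the pairing is set up.
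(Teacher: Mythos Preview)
Your plan is essentially the paper's own proof: the same splitting $-\frac{\dd}{\dd t}\int(\tb\cdot\nabla u)\cdot\Lambda^s b = J_1 + J_2$, the same identification of $-\int(\tb\cdot\nabla u)\cdot\Lambda^s(\tb\cdot\nabla u)=-\|\Lambda^{s/2}(\tb\cdot\nabla u)\|_{L^2}^2$ as the good term from $J_2$, the same Young split on the $\Lambda^s\Delta b$ cross term, the same elimination of the pressure via divergence-freeness (the paper phrases this through $\mathbb{P}$), and the same Lemma~\ref{jiaohuanzi} treatment of the nonlinearities. One loose end to tighten: the linear piece $-\int(\tb\cdot\nabla(\tb\cdot\nabla b))\cdot\Lambda^s b\,\dd x = +\|\Lambda^{s/2}(\tb\cdot\nabla b)\|_{L^2}^2$ from $J_1$ does \emph{not} pair with anything from $J_2$; it is a bad-sign term that the paper simply bounds by $\|\Lambda^{\frac{s}{2}+2}b\|_{L^2}^2$, and this coefficient $1$ plus the $\tfrac12$ from Young in $J_2$ is where the $\tfrac32$ comes from (and yes, the missing squares on the right of \eqref{key1} are a typo).
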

\begin{proof}
Initially, by some direct calculations, it follows that
	\begin{align}\nonumber
			-\frac{\dd}{\dd t}\int_{\mathbb{T}^n}\left(\tb\cdot\nabla u\right)\cdot \Lambda^s b\dd x&=-\int_{\mathbb{T}^n}\left(\tb\cdot\nabla \partial_t u\right)\cdot \Lambda^s b-\left(\tb\cdot\nabla u\right)\cdot \Lambda^s \partial_t b\dd x\\
			&\triangleq J_1+J_2.\label{4.1}
	\end{align}

To estimate $J_1$, we can use \eqref{equation}$_1$ to update it as
\begin{align*}
	J_1&=-\int_{\mathbb{T}^n}\left(\tb\cdot\nabla \left(\tilde{b} \cdot \nabla b+\mathbb{P}(b \cdot \nabla b-u \cdot \nabla u)\right)\right)\cdot \Lambda^s b\dd x\\
	&\leq \|\Lambda^{\frac{s}{2}+1} b\|_{L^2}\|(\tb\cdot\nabla)\Lambda^{\frac{s}{2}-1}\mathbb{P}(b \cdot \nabla b-u \cdot \nabla u)\|_{L^2}-\int_{\mathbb{T}^n}\left(\tb\cdot\nabla \right)^2b\cdot \Lambda^s b\dd x,
\end{align*}
which implies, after using Lemma \ref{jiaohuanzi} and integration by parts, that
\begin{align}\label{J1}
		J_1&\leq\|\Lambda^{\frac{s}{2}+1} b\|_{L^2}\left(\|\Lambda^{\frac{s}{2}+1} b\|_{L^2}\|\nabla b\|_{L^\infty}+\|\Lambda^{\frac{s}{2}+1} u\|_{L^2}\|\nabla u\|_{L^\infty}\right)-\left\|\Lambda^{\frac{s}{2}}\left(\tb\cdot\nabla b\right)\right\|^2_{L^2}\\
&\leq\|\Lambda^{\frac{s}{2}+1} b\|_{L^2}\left(\|\Lambda^{\frac{s}{2}+1} b\|_{L^2}\|\nabla b\|_{L^\infty}+\|\Lambda^{\frac{s}{2}+1} u\|_{L^2}\|\nabla u\|_{L^\infty}\right)+ \left\|\Lambda^{\frac{s}{2}+2}b\right\|_{L^2}^2\notag.
\end{align}

Similarly, one can use \eqref{equation}$_2$ and integrations by parts to estimate $J_2$ as
\begin{align}
	J_2&=-\int_{\mathbb{T}^n}\left(\tb\cdot\nabla u\right)\cdot \Lambda^s\left(\Delta b-u \cdot \nabla b+\tilde{b} \cdot \nabla u+b \cdot \nabla u\right)\dd x\notag\\
	\leq& -\int_{\mathbb{T}^n}\left(\tb\cdot\nabla u\right)\cdot \Lambda^s\Delta b\dd x- \int_{\mathbb{T}^n} \left(\tb\cdot\nabla u\right)\cdot \Lambda^s\left(\tb\cdot\nabla u\right)\dd x\label{J2}\\
&+\|\tb\cdot\nabla \Lambda^\frac{s}{2}u\|_{L^2}\|\Lambda^\frac{s}{2}\left(b \cdot \nabla u-u \cdot \nabla b\right)\|_{L^2}\notag\\
\leq&- \left\|\Lambda^{\frac{s}{2}}\left(\tb\cdot\nabla u\right)\right\|_{L^2}^2+\frac{1}{2}\left\|\Lambda^{\frac{s}{2}}\left(\tb\cdot\nabla u\right)\right\|_{L^2}^2+\frac{1}{2}\left\|\Lambda^{\frac{s}{2}+2}b\right\|_{L^2}^2\notag\\
&+\|\Lambda^{\frac{s}{2}+1} u\|_{L^2}\left(\|\Lambda^{\frac{s}{2}+1} b\|_{L^2}\|\nabla u\|_{L^\infty}+\|\Lambda^{\frac{s}{2}+1} u\|_{L^2}\|\nabla b\|_{L^\infty}\right).\notag
\end{align}

Finally, by substituting \eqref{J1} and \eqref{J2} into \eqref{4.1}, we can finish all the proof.
\end{proof}

\subsection{Proofs for the nonlinear stability results}\label{4/3}

\,\,\,\,\,\,\,\,Based on above analysis, we will concentrate on establishing {\it optimal} decay estimates in Sobolev space with {\it lower} regularity,
which helps to prevent any potential growth in the Sobolev norm of $u$ and $b$. Fortunately, we discover a {\it new} dissipative mechanism (see \eqref{4.4}), that brings us the {\it sharp} decay estimates
(see \eqref{decay}).
\begin{lemma}\label{lem4}
Let $m\geq 2r+\dfrac{n}{2}+3$,	assume that there exist $T>0$
	\begin{align}\label{small}
		\sup_{t\in[0,T]}\left(\|u(t)\|_{H^m}+\|b(t)\|_{H^m}\right)\leq \delta,
	\end{align}
for a sufficiently small $\delta$ and satisfies
	\begin{align}
		\int_{\mathbb{T}^n}u_0\dd x=\int_{\mathbb{T}^n}b_0\dd x=0.\label{meanzero}
	\end{align}
	Then it holds that, for any $0\leq t\leq T$,
	\begin{align}
		\|u(t)\|_{H^{\frac{s}{2}+1}}+\|b(t)\|_{H^{\frac{s}{2}+1}}\leq C(1+t)^{-\frac{m-\frac{s}{2}-1}{2+2r}},\label{decay}
	\end{align}
with $2r\leq s< n+2+2r$.
\end{lemma}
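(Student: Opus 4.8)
The plan is to set $s$ with $2r\le s< n+2+2r$, let $\lambda=\tfrac s2+1$, and work with the Lyapunov functional $F(t)$ from \eqref{lya}, which by the Cauchy--Schwarz inequality and the smallness/boundedness of $\tb$ satisfies $\tfrac12\bigl(\|\Lambda^{\lambda}u\|_{L^2}^2+\|\Lambda^{\lambda}b\|_{L^2}^2\bigr)\le F(t)\le 2A\bigl(\|\Lambda^{\lambda}u\|_{L^2}^2+\|\Lambda^{\lambda}b\|_{L^2}^2\bigr)$, provided $A=\tfrac{|\tb|}2+1$ is chosen as indicated (one needs $\|\Lambda^{\lambda}(\tb\cdot\nabla u)\cdot\Lambda^{s}b\|$ absorbed, using $\Lambda^{s}b=\Lambda^{\lambda-1}\Lambda^{\lambda-1}\Lambda b$ and integration by parts to balance derivatives as $\tfrac s2$ on each factor). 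First I would combine Lemma \ref{hign energy} (with $\lambda=\tfrac s2+1$, so the $\nu$-term gives $\|\Lambda^{\frac s2+2}b\|_{L^2}^2$) multiplied by $A$, with Lemma \ref{lem3}; the crucial cancellation is that the bad term $\tfrac32\|\Lambda^{\frac s2+2}b\|_{L^2}^2$ coming from $-\tfrac{\rm d}{{\rm d}t}\!\int(\tb\cdot\nabla u)\cdot\Lambda^{s}b$ is dominated by the good term $A\,\nu\|\Lambda^{\frac s2+2}b\|_{L^2}^2$ once $A$ is large enough, while the term $-\tfrac12\|\Lambda^{\frac s2}(\tb\cdot\nabla u)\|_{L^2}^2$ survives with a definite sign. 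This yields a differential inequality of the shape
\begin{align*}
\frac{{\rm d}}{{\rm d}t}F(t)+c_1\|\Lambda^{\frac s2+2}b\|_{L^2}^2+\frac12\|\Lambda^{\frac s2}(\tb\cdot\nabla u)\|_{L^2}^2
\le C\bigl(\|\nabla u\|_{L^\infty}+\|\nabla b\|_{L^\infty}\bigr)\bigl(\|\Lambda^{\lambda}u\|_{L^2}^2+\|\Lambda^{\lambda}b\|_{L^2}^2\bigr).
\end{align*}

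Next I would convert the two dissipation terms into control of $F(t)$ itself. Using the Poincaré-type Lemma \ref{2.1} (with the mean-zero hypothesis \eqref{meanzero}, which propagates in time), $\|\Lambda^{\lambda}u\|_{L^2}\le\|\Lambda^{\frac s2}u\|_{L^2}^{\,?}$ — more precisely $\|\Lambda^{\frac s2}u\|_{\dot H^{?}}$ is bounded by $\|\tb\cdot\nabla u\|_{\dot H^{\frac s2}}$ shifted by $r$ — so $\|\Lambda^{\lambda}u\|_{L^2}^2\lesssim\|\Lambda^{\frac s2 -r}(\tb\cdot\nabla u)\|_{L^2}^2$, and since $b$ is mean-zero likewise $\|\Lambda^{\lambda}b\|_{L^2}^2\lesssim\|\Lambda^{\frac s2+1-r}b\|_{L^2}^2$. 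The point is that the full dissipation $\|\Lambda^{\frac s2+2}b\|_{L^2}^2+\|\Lambda^{\frac s2}(\tb\cdot\nabla u)\|_{L^2}^2$ controls $F(t)$ only after losing $r+1$ derivatives; so one interpolates: $\|\Lambda^{\lambda}\cdot\|_{L^2}^{2}\le \bigl(\text{dissipation}\bigr)^{\theta}\bigl(\|\Lambda^{m}\cdot\|_{L^2}^2\bigr)^{1-\theta}$ with $\theta=\frac{m-\lambda}{m-\lambda+(r+1)}$, which after Young's inequality gives, using $\|u\|_{H^m}+\|b\|_{H^m}\le\delta$ and the $\|\nabla u\|_{L^\infty}+\|\nabla b\|_{L^\infty}\lesssim\delta\cdot(\text{lower norm})$ smallness to kill the right-hand forcing, a closed inequality
\begin{align*}
\frac{{\rm d}}{{\rm d}t}F(t)\le -c_2\,F(t)^{1+\frac{1}{\theta}-1}+C\delta^{?},
\end{align*}
i.e. essentially $F'\le -c\,F^{1+\gamma}+(\text{small lower-order})$ with $\gamma=\frac{r+1}{m-\lambda}$. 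This is exactly the mechanism flagged in \eqref{keyJLX}: the time-weight $M=A+\tfrac{c^*t}{8j}$ is introduced precisely so that the surviving high-regularity forcing term $I$, bounded by $C M^{-1-\frac{m-(r+1)}{r+1}}\|\Lambda^{m}u\|_{L^2}^2\lesssim\delta^2 M^{-1-\frac{m-(r+1)}{r+1}}$, is integrable against the decaying weight, and one runs a weighted energy argument $\tfrac{{\rm d}}{{\rm d}t}(M^{a}F)\le \cdots$ choosing the exponent $a$ to match.

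Carrying this out gives $F(t)\le C(1+t)^{-1/\gamma}=C(1+t)^{-\frac{m-\lambda}{r+1}}$, whence $\|\Lambda^{\lambda}u\|_{L^2}+\|\Lambda^{\lambda}b\|_{L^2}\le C(1+t)^{-\frac{m-\frac s2-1}{2(r+1)}}$, which is \eqref{decay}. The constraint $m\ge 2r+\tfrac n2+3$ is what makes the argument consistent: with $\lambda=\tfrac s2+1$ and $s$ allowed up to just below $n+2+2r$, the top interpolation endpoint stays $\le m$ and the exponent in \eqref{decay} stays positive; also $\lambda=\tfrac s2+1$ can be pushed above $\tfrac n2+1$ so that later (in the global theorem) $H^\lambda\hookrightarrow W^{1,\infty}$ and $\int_0^\infty(\|\nabla u\|_{L^\infty}+\|\nabla b\|_{L^\infty})\,{\rm d}t<\infty$. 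The main obstacle I expect is the bookkeeping of derivative counts in the coupling term $\int(\tb\cdot\nabla u)\cdot\Lambda^{s}b\,{\rm d}x$ and its time derivative $J_1+J_2$: getting the good sign on $-\tfrac12\|\Lambda^{\frac s2}(\tb\cdot\nabla u)\|_{L^2}^2$ while ensuring every nonlinear remainder is genuinely quadratically small in $\delta$ and carries enough derivatives to be swallowed by the $\delta$-smallness and the interpolation — and then tuning the weight exponent $a$ (equivalently the constant $j$ in $M$) so the forcing term $I$ is integrated with room to spare. Everything else (Plancherel, commutators via Lemma \ref{jiaohuanzi}, Gagliardo--Nirenberg via Lemma \ref{chazhi1}) is routine.
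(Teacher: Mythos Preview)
Your overall strategy is the paper's: form $F(t)$ by combining $A$ times Lemma~\ref{hign energy} (at $\lambda=\tfrac{s}{2}+1$) with Lemma~\ref{lem3}, use the Diophantine Lemma~\ref{2.1} to convert $\|\Lambda^{\frac{s}{2}}(\tb\cdot\nabla u)\|_{L^2}^{2}$ into control of $\|\Lambda^{\frac{s}{2}-r}u\|_{L^2}^{2}$, and then extract algebraic decay. The place where your sketch is genuinely imprecise is the absorption of the cubic right-hand side $(\|\nabla u\|_{L^\infty}+\|\nabla b\|_{L^\infty})\bigl(\|\Lambda^{\lambda}u\|_{L^2}^{2}+\|\Lambda^{\lambda}b\|_{L^2}^{2}\bigr)$. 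Bounding it merely by $C\delta F$ (the reading your phrase ``$\|\nabla u\|_{L^\infty}\lesssim\delta\cdot(\text{lower norm})$'' invites) leaves a linear term $C\delta F$ in the ODE which, against $-cF^{1+\gamma}$, drives $F$ only down to a nonzero floor of size a power of $\delta$ and does \emph{not} give \eqref{decay} uniformly in $T$. What the paper does instead is absorb the cubic term into the \emph{dissipation}: a matched pair of Gagliardo--Nirenberg interpolations (Lemma~\ref{chazhi1}),
\[
\|\nabla u\|_{L^\infty}\le C\|\Lambda^{l}u\|_{L^2}^{\,1-2\beta}\|\Lambda^{\frac{s}{2}-r}u\|_{L^2}^{\,2\beta},\qquad
\|\Lambda^{\frac{s}{2}+1}u\|_{L^2}^{2}\le C\|\Lambda^{l}u\|_{L^2}^{\,2\beta}\|\Lambda^{\frac{s}{2}-r}u\|_{L^2}^{\,2-2\beta},
\]
with the \emph{same} $\beta$, which forces the high endpoint $l=2r+\tfrac{n}{2}+3$. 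This --- not the energy-to-dissipation interpolation you cite --- is the exact origin of the hypothesis $m\ge 2r+\tfrac{n}{2}+3$, and it is also why the upper restriction $s<n+2+2r$ (equivalently $\tfrac{s}{2}-r<\tfrac{n}{2}+1$) is needed, so that the low endpoint sits below the $W^{1,\infty}$ scaling.

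Once the cubic term is swallowed by the dissipation one has the clean inequality
\[
\frac{\dd}{\dd t}F(t)\le -c\bigl(\|\Lambda^{\frac{s}{2}+1}b\|_{L^2}^{2}+\|\Lambda^{\frac{s}{2}-r}u\|_{L^2}^{2}\bigr),
\]
after which your nonlinear-ODE route $F'\le -cF^{1+\gamma}$ with $\gamma=\tfrac{r+1}{m-\lambda}$ (obtained by interpolating $\|\Lambda^{\lambda}u\|_{L^2}$ between $\|\Lambda^{\frac{s}{2}-r}u\|_{L^2}$ and the a priori bound $\|\Lambda^{m}u\|_{L^2}\le\delta$) and the paper's route (the frequency-splitting inequality $\tfrac{A}{M}\|\Lambda^{\frac{s}{2}+1}u\|_{L^2}^{2}-\|\Lambda^{\frac{s}{2}-r}u\|_{L^2}^{2}\le (A/M)^{1+\frac{m-\lambda}{r+1}}\|\Lambda^{m}u\|_{L^2}^{2}$, followed by the $M^{j}$-weighted integration with $M=A+\tfrac{c^{*}t}{8j}$) are equivalent; either delivers \eqref{decay}. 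So your endgame is fine --- just make sure the cubic absorption lands in the dissipation, not in $F$.
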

\begin{proof}
Owing to \eqref{meanzero}, one has for any $t\geq 0$,
\begin{equation}\label{meanzero3}
\int_{\mathbb{T}^n}u(x,t)\dd x=\int_{\mathbb{T}^n}b(x,t)\dd x=0,
\end{equation}
and as a result
\begin{equation}\label{meanzero2}
 \w{u}(0,t)=0,\quad  \w{b}(0,t)=0.
\end{equation}

Taking $\lambda=\frac{s}{2}+1$, $\mu=0$, $\nu=1$ and multiplying \eqref{energy} with $A=\frac{|\tilde{b}|}{2}+1$, then summing up the resultant and \eqref{key1}, it yields that
\begin{align}
	&\frac{1}{2}	\frac{\dd}{\dd t}\left(A\left(\|\Lambda^{\frac{s}{2}+1} u\|_{L^2}^2+\|\Lambda^{\frac{s}{2}+1} b\|_{L^2}^2\right)-\int_{\mathbb{T}^n}\left(\tb\cdot\nabla u\right)\cdot \Lambda^s b\dd x\right)\label{esti1}\\
	&\leq -\frac{1}{4}\left(\|\Lambda^{\frac{s}{2}+2} b\|_{L^2}^2+\left\|\Lambda^{\frac{s}{2}}\left(\tb\cdot\nabla u\right)\right\|_{L^2}^2\right)+C \left(\|\nabla u\|_{L^\infty}+\|\nabla b\|_{L^\infty}\right)\left(\|\Lambda^{\frac{s}{2}+1} u\|_{L^2}^2+\|\Lambda^{\frac{s}{2}+1} b\|_{L^2}^2\right)\notag\\
	&\leq -\frac{c^*}{4}\left(\|\Lambda^{\frac{s}{2}+1} b\|_{L^2}^2+\left\|\Lambda^{\frac{s}{2}-r}u\right\|_{L^2}^2\right)+C \left(\|\nabla u\|_{L^\infty}+\|\nabla b\|_{L^\infty}\right)\left(\|\Lambda^{\frac{s}{2}+1} u\|_{L^2}^2+\|\Lambda^{\frac{s}{2}+1} b\|_{L^2}^2\right).\notag
\end{align}
In the last step, we utilized Lemma \ref{2.1}, \eqref{meanzero2}, the fact $|k|^{s+4}>|k|^{s+2}\,\,(k\geq 1)$ and set $c^*=\min\{1,c\}$ with $c$ is the constant in \eqref{Diophantine}. Thanks to Lemma \ref{chazhi1}, we have
\begin{align}
	\|\nabla u\|_{L^\infty}\leq  C\|\Lambda^l u\|_{L^2}^{1-2\beta}\|\Lambda^{\frac{s}{2}-r}u\|_{L^2}^{2\beta},\label{inter1}
\end{align}
where $\beta\in [0,\frac{1}{2}]$ satisfies $$1=(l-\frac{n}{2})(1-2\beta)+(\frac{s}{2}-r-\frac{n}{2})2\beta\Rightarrow
	1+\frac{n}{2}=l(1-2\beta)+(\frac{s}{2}-r)2\beta,$$ with $2r\leq s< n+2+2r$, the upper bound of $s$ is due to satisfy $\frac{s}{2}-r>1+\frac{n}{2}.$ Similarly,
\begin{align}
	\|\Lambda^{\frac{s}{2}+1} u\|_{L^2}^2\leq C\|\Lambda^l u\|_{L^2}^{2\gamma}\|\Lambda^{\frac{s}{2}-r}u\|_{L^2}^{2-2\gamma},\label{inter2}
\end{align}
where $\gamma\in[0,1]$ such that $$\frac{s}{2}+1=l\gamma+(\frac{s}{2}-r)(1-\gamma)\Rightarrow s+2=2l\gamma+(\frac{s}{2}-r)(2-2\gamma).$$Let $\beta=\gamma$, then we have
$$s+\frac{n}{2}+3=l+s-2r \Rightarrow l=2r+\frac{n}{2}+3.$$ With the help of \eqref{inter1}, \eqref{inter2}, \eqref{meanzero2} and the fact $|k|^{s+2}>|k|^{s-2r}\,\,(k\geq 1)$, it yields that
\begin{align*}
	&C\left(\|\nabla u\|_{L^\infty}+\|\nabla b\|_{L^\infty}\right)\left(\|\Lambda^{\frac{s}{2}+1} u\|_{L^2}^2+\|\Lambda^{\frac{s}{2}+1} b\|_{L^2}^2\right)\\
	\leq&C (\|u(t)\|_{\dot{H}^l}+\|b(t)\|_{\dot{H}^l})\left(\left\|\Lambda^{\frac{s}{2}-r}u\right\|_{L^2}^2+\left\|\Lambda^{\frac{s}{2}-r}b\right\|_{L^2}^2\right)\\
	\leq&C (\|u(t)\|_{H^m}+\|b(t)\|_{H^m})\left(\left\|\Lambda^{\frac{s}{2}-r}u\right\|_{L^2}^2+\left\|\Lambda^{\frac{s}{2}+1}b\right\|_{L^2}^2\right),
\end{align*}
where $m\geq l=2r+\frac{n}{2}+3.$ Which can be updated as, by applying \eqref{small} and taking $\delta>0$ small enough
\begin{align}\notag
	&C\left(\|\nabla u\|_{L^\infty}+\|\nabla b\|_{L^\infty}\right)\left(\|\Lambda^{\frac{s}{2}+1} u\|_{L^2}^2+\|\Lambda^{\frac{s}{2}+1} b\|_{L^2}^2\right)\label{esti2}\\
\leq& \frac{c^*}{8}\left(\left\|\Lambda^{\frac{s}{2}-r}u\right\|_{L^2}^2+\left\|\Lambda^{\frac{s}{2}+1}b\right\|_{L^2}^2\right).
\end{align}
Thus, combining \eqref{esti1} and \eqref{esti2} leads to
	\begin{align}\notag
		&\frac{1}{2}	\frac{\dd}{\dd t}\left(A\left(\|\Lambda^{\frac{s}{2}+1} u\|_{L^2}^2+\|\Lambda^{\frac{s}{2}+1} b\|_{L^2}^2\right)-\int_{\mathbb{T}^n}\left(\tb\cdot\nabla u\right)\cdot \Lambda^s b\dd x\right)\label{4.2}\\
\leq& -\frac{c^*}{8}\left(\|\Lambda^{\frac{s}{2}+1} b\|_{L^2}^2+\left\|\Lambda^{\frac{s}{2}-r}u\right\|_{L^2}^2\right).
	\end{align}

Let $M$ be a number to be specified later such that $\dfrac{A}{M}\leq 1$ and by Plancherel’s theorem, we can conclude
\begin{align}\notag
	&\frac{A}{M}\|\Lambda^{\frac{s}{2}+1} u\|_{L^2}^2-\|\Lambda^{\frac{s}{2}-r} u\|_{L^2}^2=\sum_{|k|\neq 0}\left(\frac{A}{M}|k|^{s+2}-|k|^{s-2r}\right)\left|\w{u}(k)\right|^2\label{AM}\\
	\leq& \frac{A}{M}\sum_{\frac{A}{M}\geq |k|^{-2r-2}}|k|^{s+2}\left|\w{u}(k)\right|^2\notag\\
	\leq& \left(\frac{A}{M}\right)^{1+\frac{m-\frac{s}{2}-1}{1+r}}\sum_{\frac{A}{M}\geq |k|^{-2r-2}}\left(\dfrac{M}{A}\right)^{\frac{m-\frac{s}{2}-1}{1+r}}\left(\frac{1}{|k|}\right)^{2m-s-2}|k|^{2m}\left|\w{u}(k)\right|^2\notag\\
	\leq& \left(\frac{A}{M}\right)^{1+\frac{m-\frac{s}{2}-1}{1+r}}\|\Lambda^{m}u\|_{L^2},
\end{align}
the final step in the above equation is justified by the condition that $m\geq \frac{s}{2}+1$.
Utilizing the Holder inequality and the Poincaré inequality, we can derive
\begin{align}\label{4.3}
	\left|\int_{\mathbb{T}^n}\left(\tb\cdot\nabla u\right)\cdot \Lambda^s b\dd x\right|\leq|\tilde{b}| \|\Lambda^{\frac{s}{2}+1}u\|_{L^2}\|\Lambda^\frac{s}{2}b\|_{L^2}\leq \frac{|\tilde{b}|}{2}\|\Lambda^{\frac{s}{2}+1}u\|_{L^2}^2+\frac{|\tilde{b}|}{2}\|\Lambda^{\frac{s}{2}+1}b\|_{L^2}^2,
\end{align}
where we used \eqref{meanzero2} and the fact $|k|^{s+2}>|k|^{s}\,\,(k\geq 1)$ in \eqref{4.3}. Making use of \eqref{AM} and \eqref{4.3},
we can update \eqref{4.2} as
\begin{align}
&\frac{\dd}{\dd t}\left(A\left(\|\Lambda^{\frac{s}{2}+1} u\|_{L^2}^2+\|\Lambda^{\frac{s}{2}+1} b\|_{L^2}^2\right)-\int_{\mathbb{T}^n}\left(\tb\cdot\nabla u\right)\cdot \Lambda^s b\dd x\right)\label{4.4}\\
\leq& -\frac{c^*}{4M}A\left(\|\Lambda^{\frac{s}{2}+1} u\|_{L^2}^2+\|\Lambda^{\frac{s}{2}+1} b\|_{L^2}^2\right)+\frac{c^*}{4}\left(\frac{A}{M}\|\Lambda^{\frac{s}{2}+1} u\|_{L^2}^2-\|\Lambda^{\frac{s}{2}-r} u\|_{L^2}^2\right)\notag\\
\leq&  -\frac{c^*}{8M}\left(A\left(\|\Lambda^{\frac{s}{2}+1} u\|_{L^2}^2+\|\Lambda^{\frac{s}{2}+1} b\|_{L^2}^2\right)-\int_{\mathbb{T}^n}\left(\tb\cdot\nabla u\right)\cdot \Lambda^s b\dd x\right)+\frac{C}{4M^{1+\frac{m-\frac{s}{2}-1}{1+r}}}\|\Lambda^{m}u\|_{L^2}^2.\notag
\end{align}

In what follows, we will establish corresponding decay estimate. To this end, we recall the Lyapunov functional in \eqref{lya} as
\begin{align*}
	F(t)\triangleq A\left(\|\Lambda^{\frac{s}{2}+1} u(t)\|_{L^2}^2+\|\Lambda^{\frac{s}{2}+1} b(t)\|_{L^2}^2\right)-\int_{\mathbb{T}^n}\left(\tb\cdot\nabla u(t)\right)\cdot \Lambda^s b(t)\dd x,
\end{align*}
which satisfies
\begin{align*}
	F(t)\geq \left(\|\Lambda^{\frac{s}{2}+1} u\|_{L^2}^2+\|\Lambda^{\frac{s}{2}+1} b\|_{L^2}^2\right),
\end{align*}
according to \eqref{4.3}. Then, one can take $M=A+\dfrac{c^*t}{8j}$ and multiply both sides of \eqref{4.4} by $(A+\dfrac{c^*t}{8j})^j$ to obtain
\begin{align}
	\frac{\dd}{\dd t}\left((A+\dfrac{c^*t}{8j})^jF(t)\right)\leq C(A+\dfrac{c^*t}{8j})^{j-1-\frac{m-\frac{s}{2}-1}{1+r}}\|\Lambda^{m}u(t)\|_{L^2}^2,\label{4.5}
\end{align}
where $j>\dfrac{m-r-1}{1+r}\geq \dfrac{m-\frac{s}{2}-1}{1+r}$. Integrating \eqref{4.5} from $0$ to $t$ yields
\begin{align*}
	(A+\dfrac{c^*t}{8j})^jF(t)\leq A^jF(0)+C\int_{0}^t(A+\dfrac{c^*\tau}{8j})^{j-1-\frac{m-\frac{s}{2}-1}{1+r}}\|\Lambda^{m}u(\tau)\|_{L^2}^2\dd \tau,
\end{align*}
which further implies, after multiplying both sides with $(1+t)^{\frac{m-\frac{s}{2}-1}{1+r}-j}$, that
\begin{align*}
&(1+t)^{\frac{m-\frac{s}{2}-1}{1+r}}F(t)\\
\leq& C (1+t)^{\frac{m-\frac{s}{2}-1}{1+r}-j}A^j F(0)+C(1+t)^{\frac{m-\frac{s}{2}-1}{1+r}-j}\int_{0}^t(A+\dfrac{c^*\tau}{8j})^{j-1-\frac{m-\frac{s}{2}-1}{1+r}}\|\Lambda^{m}u(\tau)\|_{L^2}^2\dd \tau\\
\leq& C+C(1+t)^{\frac{m-\frac{s}{2}-1}{1+r}-j+j-\frac{m-\frac{s}{2}-1}{1+r}}\sup_{0\leq\tau\leq t}\|\Lambda^{m}u(\tau)\|_{L^2}^2\leq C,
\end{align*}
and thus
\begin{align}
	\|\Lambda^{\frac{s}{2}+1} u\|_{L^2}^2+\|\Lambda^{\frac{s}{2}+1} b\|_{L^2}^2\leq C(1+t)^{-\frac{m-\frac{s}{2}-1}{1+r}}.\label{Hsdecay}
\end{align}
In the end, by combing \eqref{Hsdecay}, the fact $(1+|k|^2)^{\frac{s}{2}+1}\leq C|k|^{s+2}$ for $|k|\geq 1$ and Plancherel formula, we obtain \eqref{decay} and finish the proof.
\end{proof}

We now prove Theorem \ref{thm}. The local well-posedness part of system \eqref{equation} can be shown via a rather standard procedure such as Friedrichs' method of cutoff in Fourier space,
see \cite{li-2017-local,fefferman-2014-local} for example. Therefore, it suffices to establish the global a priori estimates of solutions in $H^m$ and then use the continuity method to show our main result.

\begin{proof}[Proof of Theorem \ref{thm}]
Based on the above analysis, there exists a positive time $T>0$ and a unique local solution $(u, b) \in C( [0,T]; H^m )$ of the system \eqref{equation}. Moreover, according to the initial assumptions \eqref{smallcondition}, we can assume that
	\begin{align*}
	\sup_{t\in[0,T]}\left(\|u(t)\|_{H^m}+\|b(t)\|_{H^m}\right)\leq \delta,
\end{align*}
for some $0<\delta<1$.

Thanks to Lemma \ref{lem4}, by setting $s$ to be equal to $2r$, for $r>n-1$, it holds that
		\begin{align}\label{lower}
		\|u(t)\|_{H^{r+1}}+\|b(t)\|_{H^{r+1}}\leq C(1+t)^{-\frac{m-r-1}{2+2r}},
	\end{align}
which together with imbedding inequality and $r+1>n\geq \frac{n}{2}+1$ implies
\begin{align}
\|\nabla u\|_{L^\infty}+\|\nabla b\|_{L^\infty}\leq C	\left(\|u(t)\|_{H^{r+1}}+\|b(t)\|_{H^{r+1}}\right)\leq C(1+t)^{-\frac{m-r-1}{2+2r}}.\label{linfinity}
\end{align}
\eqref{linfinity} and Lemma \ref{energy} with $\lambda=m$ lead to
	\begin{align}
	\frac{1}{2}\frac{\dd}{\dd t}\left(\|\Lambda^m u\|_{L^2}^2+\|\Lambda^m b\|_{L^2}^2\right)
	\leq C(1+t)^{-\frac{m-r-1}{2+2r}}\left(\|\Lambda^m u\|_{L^2}^2+\|\Lambda^m  b\|_{L^2}^2\right).\label{nablam}
\end{align}

Due to $m>3r+3\geq 2r+\frac{n}{2}+3$, then $\frac{m-r-1}{2+2r}>1$. Applying Gronwall's inequality on \eqref{nablam}, it follows that
\begin{align*}
	\|\Lambda^m u\|_{L^2}^2+\|\Lambda^m b\|_{L^2}^2\leq C\|\Lambda^m u(0)\|_{L^2}^2+\|\Lambda^m b(0)\|_{L^2}^2\leq C\varepsilon^2,
\end{align*}
which further implies, after using \eqref{meanzero3} and Poincaré inequality, that
\begin{align}
	\|u(t)\|_{H^m}^2+\| b(t)\|_{H^m}^2\leq  C\varepsilon^2,\quad t\in[0,T].\label{Hm}
\end{align}
With \eqref{Hm} at hand, by choosing $\varepsilon$ be sufficiently small such that $C\varepsilon < \frac{1}{2}\delta$, then we can conclude from the continuity argument that the local solution can be extended to a global one, and the following estimate holds
\begin{align}\label{hign}
		\|u(t)\|_{H^m}^2+\| b(t)\|_{H^m}^2\leq  C\varepsilon,~ t\in[0,\infty).
\end{align}

In what follows, it suffices to establish the decay estimates of solution in $H^\alpha$ for any $r+1\leq \alpha\leq m$. In fact, by interpolation inequality,
we can get from \eqref{lower} and \eqref{hign}, that
\begin{align*}
		\|u(t)\|_{H^\alpha}+\|b(t)\|_{H^\alpha}&\leq C\|u(t)\|_{H^{r+1}}^{\frac{m-\alpha}{m-r-1}}\|u(t)\|_{H^m}^{\frac{\alpha-r-1}{m-r-1}}
+C\|b(t)\|_{H^{r+1}}^{\frac{m-\alpha}{m-r-1}}\|b(t)\|_{H^m}^{\frac{\alpha-r-1}{m-r-1}}\\
&\leq C(1+t)^{-\frac{m-\alpha}{2+2r}}.
\end{align*}
Thus, we finish the proof of Theorem \ref{thm}.
\end{proof}


\section*{Ackonwledgments}
\,\,\,\,\,\,\,\,Quansen Jiu was partially supported by National Natural Science Foundation of China under grants (No.\,\,11931010, No.\,\,12061003). Jitao Liu was partially supported by National Natural Science Foundation of China under grants (No.\,\,11801018, No.\,\,12061003), Beijing Natural Science Foundation under grant (No.\,\,1192001) and Beijing University of Technology under grant (No.\,\,006000514123513).

\end{document}